\documentclass[reqno,12pt]{amsart}

\usepackage{amsfonts,amsthm,amsmath,amssymb}
\usepackage{hyperref}
\usepackage{enumerate, bbm}
\hypersetup{colorlinks=false,pdfborder={0 0 0}}
\usepackage[margin=1.5in]{geometry}

\numberwithin{equation}{section}

\newtheorem{theorem}{Theorem}[section]
\newtheorem{lemma}[theorem]{Lemma}
\newtheorem{proposition}[theorem]{Proposition}
\newtheorem{remark}[theorem]{Remark}

\renewcommand{\leq}{\leqslant}

\renewcommand{\geq}{\geqslant}

\renewcommand{\d}{\mathrm{d}}
\newcommand{\ve}{\varepsilon}
\newcommand{\ZZ}{\mathbb{Z}}
\newcommand{\RR}{\mathbb{R}}
\newcommand{\CC}{\mathbb{C}}
\newcommand{\NN}{\mathbb{N}}
\renewcommand{\AA}{\mathbb{A}}

\newcommand{\x}{\mathbf{x}}
\newcommand{\y}{\mathbf{y}}
\newcommand{\z}{\mathbf{z}}
\newcommand{\w}{\mathbf{w}}
\newcommand{\h}{\mathbf{h}}
\newcommand{\kk}{\mathbf{k}}

\DeclareMathOperator{\supp}{supp}

\begin{document}

\title[Prime points on smooth hypersurfaces]{Prime points on smooth hypersurfaces}

\author{Yijie Diao}
\address{ISTA\\
Am Campus 1\\
3400 Klosterneuburg\\
Austria}
\email{yijie.diao@ist.ac.at}

\author{Shuntaro Yamagishi}
\address{Mathematisches Institut\\
Universit\"at Bonn\\
Endenicher Allee 60\\
53115 Bonn\\
Germany}
\email{yamagish@math.uni-bonn.de}

\date{\today}
\subjclass[2020]{11D72 (11P55, 11L07, 11M26)}

\begin{abstract}
Let $F \in \mathbb{Z}[x_1, \ldots, x_n]$ be a homogeneous form of degree $d \geq 4$ which defines a smooth
hypersurface in $\mathbb{P}^{n-1}_{\CC}$. For $n \geq 24 d^4 2^d$,
we prove an asymptotic formula for the number of prime solutions to the equation $F(x_1, \ldots, x_n) = 0$,
provided $F$ satisfies suitable local conditions.
\end{abstract}

\maketitle
\thispagestyle{empty}
\setcounter{tocdepth}{1}

\tableofcontents

\section{Introduction}\label{s:introduction}
Let $F \in \ZZ[x_1,\ldots,x_n]$ be a homogeneous form of degree $d \geq 2$.
We are interested in solutions to $F(\x) = 0$ whose coordinates are all primes or prime powers.
For $d \geq 3$, Cook and Magyar~\cite{cook} first obtained a general theorem regarding the asymptotic formula
for the quantity
$$
N_F(P) = \sum_{\x \in [1, P]^n \cap \ZZ^n} \Lambda(x_1) \cdots \Lambda(x_n) \mathbbm{1}_{V(F)}(\x),
$$
which is the number of integer solutions, weighted by the von Mangoldt function, to $F(\x) = 0$ in $[1, P]^n$.
For quadratic forms, the corresponding result was known due to Liu~\cite{liu} and Zhao~\cite{zhao}, while more recently Green established an asymptotic formula in eight prime variables under a mild nondegeneracy condition~\cite{green}. The notable achievement of Cook and Magyar required $n - \dim V_F^*$ to be an exponential tower in $d$, where $V_F^*$ denotes the singular locus of the affine variety
$V(F) = \{ \mathbf{z} \in {\mathbb{A}}^n_{\CC}: F(\mathbf{z}) = 0 \}$.
The second author~\cite{yamagishi} improved this, with smooth weights, to be
$$
n - \dim V_F^* \geq 2^8 3^4 5^2 d^3 (2d-1)^2 4^{d}.
$$
Liu and Zhao~\cite{liuzhao} developed a different method based on mean value estimates, leading to the following further improvement
$$
n - \dim V_F^* \geq 12d(d-1)4^d+7(d-1)2^d+4d+8.
$$
The required codimension of $V_F^*$ of order $4^d$, up to a polynomial in $d$, is a substantial improvement compared to the original exponential tower in $d$ in Cook and Magyar's work. However, this remains far from Birch's result~\cite{birch} for integer solutions, which requires $n - \dim V_F^* > (d-1) 2^d$.
In this paper, we show that for $F$ defining a smooth hypersurface in $\mathbb{P}_{\CC}^{n-1}$, we can reduce the number of variables needed  to  order $d^4 2^d$.
\begin{theorem}
\label{t:main}
Let $F \in \ZZ[x_1, \ldots, x_n]$ be a homogeneous form of degree $d \geq 4$ with $V_F^{*} = \{ \mathbf{0} \}$.
Suppose
$$
n \geq 24  d^4  2^{d}.
$$
Suppose further that the equation $F(\x) = 0$ has a nonsingular real solution in $(0,1)^{n}$, and also
has a nonsingular solution in $( \mathbb{Z}_p^{\times} )^n$ for every prime $p$.
Then
$$
N_F(P) \gg P^{n - d}.
$$
\end{theorem}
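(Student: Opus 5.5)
We will prove Theorem~\ref{t:main} by the Hardy--Littlewood circle method, with a weighted, smoothed version of $N_F(P)$. Fix a nonsingular real solution $\x_0 \in (0,1)^n$ and a smooth bump $\omega$ supported in a small box around $\x_0$ inside $(0,1)^n$. Since $N_F(P) \geq P^{-0}\cdot$ (a constant multiple of) $\sum_{\x} \Lambda(x_1)\cdots\Lambda(x_n)\,\omega(\x/P)\,\mathbbm{1}_{V(F)}(\x)$, it suffices to show that this smoothed count equals $\mathfrak{S}\,\mathfrak{J}\,P^{n-d} + o(P^{n-d})$ with $\mathfrak{S}\mathfrak{J}>0$. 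Write the count as $\int_0^1 S(\alpha)\,\d\alpha$, where $S(\alpha) = \sum_{\x} \Lambda(\x)\,\omega(\x/P)\,e(\alpha F(\x))$, and split $[0,1)$ into major arcs $\mathfrak{M}$ (those $\alpha$ with $|q\alpha - a| \leq P^{-d+\theta}$ and $q \leq P^{\theta}$) and minor arcs $\mathfrak{m}$.

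\textbf{Major arcs.} On $\mathfrak{M}$ we substitute Siegel--Walfisz in the form of the prime number theorem in arithmetic progressions. To reach a power saving we either take $q \leq (\log P)^A$ only, or use a zero-density or Deuring--Heilbronn style treatment of a possible exceptional character; the MSC code 11M26 suggests the latter. This yields the expected main term $\mathfrak{S}(P)\,\mathfrak{J}(P)\,P^{n-d}$. The singular series converges absolutely because $n$ is large: the complete sums $\sum_{\x \bmod q,\ (x_i,q)=1} e(aF(\x)/q)$ obey a Birch-type bound $q^{n - n/(2^{d-1}(d-1)) + \ve}$, which is available for $V_F^*=\{\mathbf 0\}$. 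The local factors are positive thanks to the nonsingular solution in $(\ZZ_p^\times)^n$, via Hensel's lemma. The singular integral is positive because of the nonsingular real point in $\supp\omega$.

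\textbf{Minor arcs: the main obstacle.} We need $\int_{\mathfrak{m}} |S(\alpha)|\,\d\alpha = o(P^{n-d})$. The plan is to decompose $\Lambda$ in each variable by Vaughan's identity (or Heath-Brown's). This turns $S$ into type I and type II multilinear sums in which each coordinate $x_i = u_i v_i$ carries a bilinear structure. Exploiting smoothness, we would apply Cauchy--Schwarz to eliminate the rough coefficients, then $d-1$ rounds of Weyl differencing, following Birch but in the variables $(u_i,v_i)$. That reduces matters to counting solutions of the multilinear system attached to $F$. Its dimension is controlled because $V_F^*=\{\mathbf 0\}$: the variety $\{\partial_i F = 0\}$ is just the origin, and its multilinear analogue has codimension $\gg n$.

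The result should be a Weyl-type inequality of the shape $|S(\alpha)| \ll P^{n - \delta n/(d^{c}2^{d})}$ away from $\mathfrak{M}$. The polynomial-in-$d$ loss, giving the factor $d^4$, comes from the bilinear decomposition and from converting a diophantine-approximation dichotomy into major arcs of size $P^{\theta}$ with $\theta$ of order $1/d$. We would then integrate, or use a dyadic pruning on intermediate arcs combined with the major-arc approximation. The condition $n \geq 24 d^4 2^d$ is exactly what makes the exponent saving beat the measure $P^{-d}$ of the relevant arcs.

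I expect the hardest step to be this type II estimate with uniform control in terms of $\theta$. The first thing I would try is to difference only in the smooth variable after Cauchy--Schwarz, keeping Birch's geometric argument intact via the smoothness of $F$ rather than $\dim V_F^*$. Finally we would compare the smoothed sum with $N_F(P)$, using $\omega \leq \mathbbm{1}_{[0,1]^n}$, to conclude $N_F(P) \gg P^{n-d}$.
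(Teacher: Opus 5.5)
There is a genuine gap, and it is in the minor arcs. Your major-arc sketch and the final comparison with $N_F(P)$ are broadly compatible with the paper. One difference: for the extended major arcs with $q \leq P^{\vartheta_0}$, the paper needs a bound for the character-twisted complete sums $\mathcal{S}(\chi_1,\ldots,\chi_n)$, taken from Nguyen's work, rather than a Birch bound for the untwisted sum over units.

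The bound $|S(\alpha)| \ll P^{n-\delta n/(d^{c}2^{d})}$ is asserted rather than derived. The route you sketch for it is Vaughan's identity in every coordinate, Cauchy--Schwarz to remove the rough coefficients, then $d-1$ Birch-style Weyl differencings. That is the mechanism of Cook--Magyar, of the second author's earlier work and of Liu--Zhao, and it leads to requirements of order $4^d$ or worse. One factor $2^d$ comes from Birch's differencing over the integers, and a second from Weyl-differencing the prime-weighted sums. In a type II piece there is no smooth variable to difference in until after a Cauchy--Schwarz step, which already halves the saving.

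Nothing in your plan removes that second factor. You also use smoothness only through a codimension count, i.e.\ essentially through $\textnormal{codim}\, V_F^*$. If the argument worked as stated, it would give $d^c 2^d$ for every form with $\textnormal{codim}\, V_F^*$ large, which suggests the key step is being assumed. Your account of the $d^4$ (bilinear decomposition, major arcs of height $P^{\theta}$ with $\theta \asymp 1/d$) also does not match how it actually arises.

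Three ideas are missing.

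\emph{First}, the Liu--Zhao mean value estimate (Proposition~\ref{LZ}). After splitting $\x = (\y,\z,\w)$, the minor-arc integral is controlled by two things. One is a pair of Birch-type conditions on $\textnormal{codim}\, V_f^*$ and $\textnormal{codim}\, V_g^*$, costing only one factor $2^d$. The other is a pointwise bound $W(L) \ll P^{t+\ve}L^{-\varrho}$ for the prime sum $\mathcal{E}_{\y,\z}(\alpha)$ in the few variables $\w$. Lemma~\ref{LEMMA2} and the partition then require roughly $n \geq (6/\varrho+7)(d-1)2^d$.

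\emph{Second}, smoothness enters through Lemma~\ref{smooth}. Since no coordinate vector lies in $V_F^*$, $F$ contains $x_i^d$ or $x_j^{d-1}x_k$ with nonzero coefficient. So one may take $t=1$ or $t=2$, with leading coefficient $c\alpha$ independent of $(\y,\z)$.

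\emph{Third}, these one- and two-variable prime sums are bounded via Vinogradov's mean value theorem, not Weyl differencing (Lemmas~\ref{lem:pure-terminal} and~\ref{lem:mixed-terminal}, built on Lemmas~\ref{lem:Weyl}--\ref{lem:prime-Weyl}). In the mixed case, Cauchy--Schwarz in $y$ reduces to an integer Weyl sum in $x$ with leading coefficient $c\alpha(y_1-y_2)$. This gives $\varrho = 1/(4d^2(d-1))$, polynomial in $d$, and hence $6(d-1)2^d/\varrho = 24d^2(d-1)^2 2^d \leq 24 d^4 2^d$.

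The codimension conditions on $V_f^*$ and $V_g^*$ then follow from $V_F^* = \{\mathbf{0}\}$ by the dimension argument in the proof of Theorem~\ref{t:minor}.
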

We in fact obtain an asymptotic formula for a smooth variant of $N_F(P)$ whenever
$$
n \geq (24 (d-1)^2 d^2 + 7 (d-1) ) 2^{d} + 11,
$$
from which Theorem \ref{t:main} follows directly. Although we prove the theorem in terms of the von Mangoldt function $\Lambda$, it can be replaced with
$\Lambda^*$, defined by $\Lambda^*(x) = \log x$ if $x$ is  prime and $0$ otherwise, without much additional effort.
Also, the present paper does not address cubic forms; these will be treated separately in a forthcoming paper.
Our proof relies on the mean value estimate developed by Liu and Zhao, and
a simple structure of $F$ defining a smooth hypersurface. Roughly speaking, in \cite[Section~7]{liuzhao}, the factor $4^d$ arises from the $2^d$ needed to apply Birch's work over the integers, together with $2^d$ coming from estimating Weyl sums over primes on  the minor arcs, both of which rely on Weyl differencing.
However, for certain polynomials, the latter estimate can be obtained without Weyl differencing by incorporating Vinogradov's mean value theorem.
Since $F$ defining a smooth hypersurface always contain a monomial of the form $x_i^d$ or $x_j^{d-1} x_k$,
this allows us to restrict our attention to two types of Weyl sums over primes, both of which can be treated using Vinogradov's mean value theorem.
Our argument does require the major arcs, defined in Section~\ref{s:prelim}, to be extended from $\mathfrak{M}((\log P)^A)$ to $\mathfrak{M}(P^\vartheta)$
for appropriately chosen $A, \vartheta > 0$, but this has already been done in the second author's work~\cite{yamagishi, yamagishi2}.

For the corresponding problem for systems of polynomials involving differing degrees, the second author~\cite{yamagishi3} generalised the argument of Cook and Magyar,
which required the number of variables to be an exponential tower in the degrees. More recently, Liu and Xie~\cite{liuxie} generalised the
method in~\cite{liuzhao} to improve this  to be an exponential in the degrees. It could be interesting to investigate to which systems of polynomials, if any,
the idea of this paper could be generalised to.  Finally, $\ve$ denotes an arbitrarily small positive number,
and its value may vary across different occurrences.


\subsection*{Acknowledgements} The first author thanks his advisor Tim Browning for his support,
and Christian Bernert for introducing the paper by Liu and Xie~\cite{liuxie} to him. 
He also thanks the Hausdorff Center for Mathematics for its hospitality during his visit.
The second author wishes to thank the Mathematical Institute at the University of Bonn for providing an excellent working environment.
The authors thank Kien Huu Nguyen for helpful clarifications concerning~\cite{nguyen1}.

\subsection*{AI usage disclosure} The central technical result of the paper is Theorem \ref{t:minor}, which gives the minor arcs estimate.
GPT-5.6 Sol worked out a first version of this estimate in response to prompts proposing to strengthen the minor arcs estimate by using specifically the structure of nonsingular forms described in Lemma~\ref{smooth}.


\section{Preliminaries}
\label{s:prelim}
Let $G \in \ZZ[x_1, \ldots, x_n]$ be a homogeneous form of degree $d \geq 2$.
We let $V^*_{G}$ denote the singular locus of $V(G) = \{  \mathbf{z} \in \mathbb{A}^n_{\CC}:  G(\mathbf{z}) = 0  \}$, i.e. it is the affine variety
\begin{equation}
\label{sing loc}
V_{G}^* =  \{ \mathbf{z} \in \mathbb{A}_{ \mathbb{C} }^{n}:  \nabla G(\mathbf{z}) = \mathbf{0} \},
\end{equation}
where
$\nabla G = \bigl( \frac{\partial G}{\partial x_1}, \ldots,  \frac{\partial G}{\partial x_n} \bigr)$.

Let $F \in \ZZ[x_1, \ldots, x_n]$ be a homogeneous form of degree $d \geq 2$.
Let $\x_0 \in (0, 1)^n$ be a nonsingular solution to $F(\x) = 0$, and let $\omega: \RR \to [0, 1]$ be a smooth function such that
$\omega(0) > 0$ and $\supp \omega \subseteq [- \delta, \delta]$ for $\delta > 0$ sufficiently small with respect to $F$ and $\x_0$.
Let $P \geq 1$. We define
\begin{eqnarray}
\label{defS}
S(\alpha) = \sum_{\x \in \ZZ^n}   \prod_{i = 1}^n  \psi_i (x_i)  \Lambda(x_i)  \cdot e (\alpha F(\x) ),
\end{eqnarray}
where
\begin{eqnarray}
\label{defn of psi}
\psi_i(x_i) = \omega \left( \frac{x_i}{P} - x_{0,i}  \right) \qquad (1 \leq i \leq n).
\end{eqnarray}

Given $L \geq 1$, let
$$
\mathfrak{M}(L)
=
\bigcup_{1 \leq q \leq L} \bigcup_{\substack{ 0 \leq  a < q  \\ \gcd(a,q) = 1 }} \left\{   \alpha \in \RR /\ZZ: \left\| \alpha - \frac{a}{q}  \right\| \leq \frac{L}{q P^d}  \right\},
$$
where $\| \beta \| = \min_{m \in \ZZ} | \beta - m |$. Let
\begin{eqnarray}
\label{defnL}
\mathfrak{n}(L) = \mathfrak{M}(2L) \setminus \mathfrak{M}(L).
\end{eqnarray}
Then
\begin{eqnarray}
\label{measnL}
|\mathfrak{n}(L)| \ll L^2 P^{-d}.
\end{eqnarray}
Dirichlet's theorem gives $\mathfrak{M}(P^{d/2}) = \RR/ \ZZ$.
Set $Q_0 = P^{1/100}$, and let $J$ be the least integer for which
$2^J Q_0 \geq P^{d/2}$. Then it follows that
\begin{eqnarray}
\label{dydshell}
(\RR / \ZZ) \setminus \mathfrak{M}(Q_0) = \bigcup_{ 1 \leq j \leq J } \mathfrak{n}( 2^{j-1} Q_0 ),
\end{eqnarray}
since
$$
(\RR / \ZZ) \setminus \mathfrak{M}( 2^{J-1} Q_0 ) =  \mathfrak{M}( 2^{J} Q_0 ) \setminus \mathfrak{M}( 2^{J-1} Q_0 ) = \mathfrak{n}(2^{J-1} Q_0).
$$

We prove the following asymptotic formula for a smooth variant of $N_F(P)$.
\begin{theorem}
\label{t:asymptotic}
Let $F \in \ZZ[x_1, \ldots, x_n]$ be a homogeneous form of degree $d \geq 4$ with $V_F^{*} = \{ \mathbf{0} \}$.
Suppose
$$
n \geq (24 (d-1)^2 d^2 + 7 (d-1) ) 2^{d} + 11.
$$
Suppose further that the equation $F(\x) = 0$ has a nonsingular real solution in $(0,1)^{n}$, and also
has a nonsingular solution in $( \mathbb{Z}_p^{\times} )^n$ for every prime $p$.
Let $\x_0$, $\delta$, $\omega$ and $\psi_1,\ldots,\psi_n$ be as in this section.
Then, for any $A > 0$, we have
\[
\sum_{\x\in\ZZ^n}\prod_{i=1}^n\psi_i(x_i)\Lambda(x_i)
\cdot\mathbbm{1}_{V(F)}(\x)
= c(F;\omega,\mathbf{x}_0)P^{n-d}
+O\left(\frac{P^{n-d}}{(\log P)^A}\right),
\]
where $c(F;\omega,\mathbf{x}_0)>0$ is a constant depending only on $F, \omega$ and $\x_0$.
\end{theorem}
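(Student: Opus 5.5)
We prove Theorem \ref{t:asymptotic} by the circle method. Write the left-hand side as $\int_{\RR/\ZZ} S(\alpha)\,\d\alpha$ and split $\RR/\ZZ$ into the major arcs $\mathfrak{M}(Q_0)$ and the dyadic shells $\mathfrak{n}(2^{j-1}Q_0)$, $1\leq j\leq J$, using \eqref{dydshell}.

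\emph{Major arcs.} On $\mathfrak{M}(Q_0)$, with $Q_0=P^{1/100}$, we approximate $S(\alpha)$ near $a/q$ by
\[
\phi(q)^{-n}\, S^*(q,a)\, I(\beta),
\]
where $S^*(q,a)$ is the complete exponential sum over reduced residues modulo $q$ and $I(\beta)$ is the weighted singular integral. Because $q$ may be as large as a power of $P$, Siegel--Walfisz alone is not enough. Instead we invoke the extended major arc analysis of \cite{yamagishi, yamagishi2}: zero-density estimates together with the handling of exceptional characters give an error of $O(P^{n-d}(\log P)^{-A})$ once we sum over $\mathfrak{M}(P^\vartheta)$. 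We then complete the singular series and the singular integral. This needs bounds of the form
\[
\phi(q)^{-n}|S^*(q,a)|\ll q^{-n/(2^{d-1}(d-1))+\ve}
\quad\text{and}\quad
I(\beta)\ll \min(1,|\beta|^{-n/(2^{d-1}(d-1))}),
\]
which follow from the Birch-type bounds for $V_F^*=\{\mathbf 0\}$ (cf.\ \cite{liuzhao}) since $n$ is huge. The singular series factors as $\prod_p \sigma_p$. The hypothesis of a nonsingular solution in $(\ZZ_p^\times)^n$, combined with Hensel lifting, gives $\sigma_p>0$ for every $p$, and absolute convergence gives $\prod_p\sigma_p>0$. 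The real nonsingular point $\x_0$, with $\delta$ small, gives $J(F;\omega,\x_0)>0$. Together these produce the constant $c(F;\omega,\x_0)>0$.

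\emph{Minor arcs.} For each shell $\mathfrak{n}(L)$ with $Q_0\leq L\leq P^{d/2}$ we need
\[
\int_{\mathfrak{n}(L)}|S(\alpha)|\,\d\alpha\ll P^{n-d}L^{-\eta}
\]
for some $\eta>0$. Summing over the $O(\log P)$ shells then gives a saving of a power of $P$.

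Following Liu--Zhao, we bound $\int_{\mathfrak{n}(L)}|S|$ using $\sup_{\mathfrak{n}(L)}|S|^{\theta}$ together with a mean value estimate for $\int|S|^{2}$-type quantities over $\mathfrak{n}(L)$. These mean value estimates come from Birch-type counting of $F(\x)=F(\y)$ in thin regions, costing about $2^d$. The measure bound \eqref{measnL} controls the size of the shell.

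The key new input is the pointwise bound on $\sup_{\mathfrak n(L)}|S(\alpha)|$. By Lemma~\ref{smooth}, $F$ contains a monomial $x_i^d$ or $x_j^{d-1}x_k$. Fix all other variables. The inner sum over $x_i$ (or over $x_j$, with $x_k$ fixed) is then a one-variable Weyl sum over primes,
\[
\sum_x \Lambda(x)\psi(x)\,e\bigl(\alpha c x^d+\text{lower-degree terms}\bigr),
\]
whose leading coefficient is $\alpha c$, respectively $\alpha c\,x_k$. We treat it by Vaughan's identity and then bound the type I and type II sums via Vinogradov's mean value theorem (efficient congruencing/decoupling). This gives a saving of a power $L^{-1/(\text{poly}(d))}$ when $\alpha$ lies in $\mathfrak n(L)$, with no Weyl differencing. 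In the $x_j^{d-1}x_k$ case we additionally average over $x_k$: for most $x_k$, the scaled coefficient $\alpha c x_k$ keeps a comparable rational approximation level, and a divisor-type counting argument handles the rest.

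The main obstacle is this pointwise prime Weyl sum estimate with a polynomial-in-$d$ exponent that is uniform throughout the shell $\mathfrak n(L)$. It is harder still in the $x_j^{d-1}x_k$ case, where the transfer of Diophantine information from $\alpha$ to $\alpha x_k$ must be controlled. Once that bound is in hand, combining it with the Liu--Zhao mean values yields the stated condition on $n$, of order $(24(d-1)^2d^2+7(d-1))2^d$.
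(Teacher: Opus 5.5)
Your overall architecture matches the paper's: Yamagishi's extended major arcs, Lemma~\ref{smooth}, and VMVT-based Weyl sums over primes in the one or two variables of the distinguished monomial. Your ``fix all other variables'' argument gives exactly the input the paper uses, namely a bound for the short sum $\mathcal{E}_{\y,\z}(\alpha)$ that is uniform in $(\y,\z)$.

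Your $x_j^{d-1}x_k$ case also looks workable as an alternative to the paper's route. You take a degree-$(d-1)$ prime Weyl sum in $x_j$ and then do a gcd count over $x_k$; the paper instead applies Cauchy--Schwarz over $x$ and differences in $y$, reducing to integer Weyl sums with leading coefficient $c\alpha(y_1-y_2)$. Note that your leading coefficient is really $\alpha(cx_k+\ell)$, with $\ell$ an integer linear form in the fixed variables. The paper's differencing removes this shift for free.

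The genuine gap is the step that turns the pointwise saving into a bound for $\int_{\mathfrak n(L)}S$. The saving is only $L^{-\varrho}$ with $\varrho=1/(4d^2(d-1))$, while $|\mathfrak n(L)|\asymp L^2P^{-d}$, and the H\"older scheme you describe cannot absorb this. Moments of $S$ over $\mathfrak n(L)$ are not positive counts. So Birch-type counting, which needs the weights $\Lambda$ discarded, only gives unrestricted bounds such as $\int_0^1|S|^{2k}\ll P^{2kn-d+\ve}$. Then
\[
\int_{\mathfrak n(L)}|S|\leq\Bigl(\sup_{\mathfrak n(L)}|S|\Bigr)^{\theta}|\mathfrak n(L)|^{1-\frac{1-\theta}{2k}}\Bigl(\int_0^1|S|^{2k}\Bigr)^{\frac{1-\theta}{2k}}\ll P^{n-d+\ve}L^{2-\frac{1-\theta}{k}-\theta\varrho},
\]
which carries a positive power of $L$ for every $k\geq1$ and $0\leq\theta\leq1$.

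The missing idea is the specific shape of Proposition~\ref{LZ}. One splits $\x=(\y,\z,\w)$ and writes $F=f(\y)+g(\y,\z)+h(\y,\z,\w)$ with $t\in\{1,2\}$. The pointwise information then enters only as $P^{m+s-d}X^2\sup|\mathcal{E}_{\y,\z}(\alpha)|$, where $X$ is a free parameter rather than $L$. Choosing $X=L^{\varrho/2-\delta_0}$ makes $X^2$ smaller than $L^{\varrho}$. The price is paid in the other two terms, which need $\textnormal{codim}\,V_f^*$ and $\textnormal{codim}\,V_g^*$ to exceed roughly $(d-1)2^{d+1}/\varrho$ (Lemma~\ref{LEMMA2}). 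That is where the factor $d^4$ comes from. The mean values do not cost ``about $2^d$'', so the mechanism you describe does not actually produce the condition on $n$ you quote.

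You also have to verify these codimension conditions for the subforms $f$ and $g$, which your proposal never mentions. The paper gets $\textnormal{codim}\,V_f^*\geq n-2(s+t)$ from \cite[Lemma~3.1]{yamagishi}. It gets $\textnormal{codim}\,V_g^*\geq n-m-2t$ by showing $V_{F-f}^*\subseteq V_2$ with $\dim V_2\leq m$ (this is where $V_F^*=\{\mathbf 0\}$ is used) and applying the same lemma again. A suitable choice of $s$ then reduces everything to $n-4t\geq(6/\varrho+7)(d-1)2^d+3$.

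A smaller point: the extended major arcs need bounds for the character-twisted sums $\mathcal S(\chi_1,\ldots,\chi_n)$, not only for $S^*(q,a)$. The paper supplies these in Lemma~\ref{lem:major-character}.
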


\section{Weyl sums over primes}\label{s:terminal}
In this section, we collect several estimates for Weyl sums that follow as consequences of Vinogradov's mean value theorem,
proved by Bourgain, Demeter, and Guth~\cite{bdg} and by Wooley~\cite{wooley-cubic, wooley-vmvt}. Weyl sums over primes
have been extensively studied in the context of the Waring--Goldbach problem, and the relevant estimates
were generalised, from the phase function being of the form $\theta x^d$ to a general polynomial, in~\cite[Section~6]{liuzhao}.

\begin{lemma}\label{lem:Weyl}
Let $d\geq3$, $X\geq1$, and let
\[
 f(x)=\theta x^d+\theta_{d-1}x^{d-1}+\cdots+\theta_1x\in\RR[x].
\]
Let $a \in\ZZ$ and $q \in \NN$ satisfy $\gcd(a,q)=1$.
Then
\begin{equation*}
 \sum_{1 \leq x \leq X} e(f(x))
 \ll X^{1+\ve}
 \bigl(X^{-1}+\lambda^{-1}+\lambda X^{-d}\bigr)^{ \frac{1}{d(d-1)}},
\end{equation*}
where $\lambda= q+ X^d|q\theta-a|$, for any $\ve > 0$. Here the implicit constant is independent of $\theta_{d-1}, \cdots, \theta_{1}$.
\end{lemma}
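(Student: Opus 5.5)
This is the classical Weyl-sum bound obtained from Vinogradov's mean value theorem, in the form of Baker's Theorem 5.2 or Vaughan's Theorem 5.2, now with the sharp exponent $s=\frac{1}{2}d(d-1)$ supplied by Bourgain--Demeter--Guth and Wooley. The plan is to run the standard shifting argument. That argument bounds $|\sum e(f(x))|^{2s}$ by a mean value of a Vinogradov system together with a count of how often the leading-coefficient spacing is small. Lower-order coefficients enter only through the free variables $\alpha_1,\dots,\alpha_{d-1}$ of the mean value, so the implied constant is uniform in $\theta_{d-1},\dots,\theta_1$.

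\emph{Shifting step.} Write $T=\sum_{1\le x\le X}e(f(x))$. For $1\le y\le Y$ with $Y=X^{1/2}$ (any $Y\le X^{1-\eta}$ would do), replace $x$ by $x+y$; the error is $O(Y)$ per shift. Averaging over $y$ gives
\[
T \ll Y^{-1}\sum_{-Y<u\le X}\Bigl|\sum_{1\le y\le Y} e\bigl(f(u+y)\bigr)\Bigr| + Y.
\]
Expand $f(u+y)=\sum_{j=1}^{d}\gamma_j(u)y^j$. Here $\gamma_d=\theta$, $\gamma_{d-1}(u)=d\theta u+\theta_{d-1}$, and the remaining $\gamma_j$ are arbitrary reals. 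Apply Hölder with exponent $2s$, where $s=\frac12 d(d-1)$. The factor $|\cdot|^{2s}$ is a trigonometric polynomial in $(\gamma_1,\dots,\gamma_d)$ whose Fourier coefficients are the counts of solutions of the Vinogradov system in $[1,Y]$ with prescribed power sums. By the standard large-sieve-type argument, sum over $u$ using only the spacing of $\gamma_{d-1}(u)$ modulo $1$, and bound the integrals in the remaining variables by the full mean value. Concretely,
\[
\sum_u\Bigl|\sum_y e(\cdot)\Bigr|^{2s}\ll \Bigl(\max_{\beta}\#\{u:\ \|d\theta u-\beta\|\le Y^{-(d-1)}\}\Bigr)\, Y^{\frac{d(d-1)}{2}}\,J_{s,d-1}(Y).
\]
Main conjecture VMVT gives $J_{s,d-1}(Y)\ll Y^{2s-\frac{(d-1)d}{2}+\ve}$ at the critical $s=\frac12 d(d-1)$. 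Here $J_{s,d-1}$ is the number of solutions of the degree-$(d-1)$ Vinogradov system.

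\emph{Counting step.} This is where $q$ and $a$ enter. Put $\theta=a/q+\beta$. A standard divisor-type lemma counts $u\le X+Y$ with $\|d\theta u-\gamma\|\le Y^{-(d-1)}$. As in Vaughan, Lemma 2.2, cover $u$ by blocks of length $q$; the factor $d$ costs only a constant. The count is
\[
\ll \bigl(1+X/q\bigr)\bigl(1+qY^{-(d-1)}\bigr),
\]
when $|\beta|$ is small relative to $q$. One then absorbs $\beta$ by the usual transference principle. Equivalently, state the bound first for $|q\theta-a|\le q^{-1}$ and pass to general $\lambda=q+X^d|q\theta-a|$ via Vaughan's Lemma 2.2 / Baker's transference lemma. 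This yields the term $(X^{-1}+\lambda^{-1}+\lambda X^{-d})$ in place of $(q^{-1}+X^{-1}+qX^{-d})$.

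\emph{Combining.} Collecting factors, with $Y$ chosen as a suitable power of $X$ ($Y=X$ essentially, using shifts $y\le X^{1-\ve}$ up to $X^\ve$ losses), gives
\[
|T|^{2s}\ll X^{2s+\ve}\bigl(X^{-1}+\lambda^{-1}+\lambda X^{-d}\bigr).
\]
This is the claimed bound with exponent $1/(2s)=1/(d(d-1))$.

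The main obstacle is the bookkeeping in the counting step and the transference from $q$ to $\lambda$; everything else is standard. The simplest route is to cite the known result directly: Baker, \emph{Diophantine Inequalities}, Theorem 5.2, or its modern form with VMVT, e.g. Lemma 6.1 in Liu--Zhao's Section 6. That result gives exactly this shape, with constants independent of the lower coefficients.
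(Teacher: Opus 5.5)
Citing Vaughan's Theorem~5.2 with the Bourgain--Demeter--Guth/Wooley bound $J_{s,d-1}(Y)\ll Y^{s+\ve}$ at $s=\frac12 d(d-1)$, then noting uniformity in $\theta_1,\dots,\theta_{d-1}$, then transferring from $q$ to $\lambda$, is exactly the paper's proof. The paper quotes the bound as recorded in Browning--Yamagishi (Lemma~2.4) and notes that the restriction $q\le X^d$ there is superfluous. It then applies Wooley's Lemma~14.1 for the transference. Two citation points. First, the transference is that lemma, not Vaughan's Lemma~2.2, which is a $\sum\min(\cdot,\|\cdot\|^{-1})$-type counting lemma. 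Second, the paper's introduction says Liu--Zhao's prime Weyl-sum bounds rest on Weyl differencing, so check that any lemma you take from their Section~6 really has exponent $1/(d(d-1))$.

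The self-contained derivation you sketch does not prove the stated bound, because of the truncated shift. With shifts $1\le y\le Y$ and an additive error $O(Y)$, your count $(1+X/q)(1+qY^{1-d})$ and H\"older give
\[
T\ll X^{1+\ve}\bigl(X^{-1}+q^{-1}+qX^{-1}Y^{1-d}+Y^{1-d}\bigr)^{\frac{1}{d(d-1)}}+Y .
\]
The term $qX^{-1}Y^{1-d}$ equals $qX^{-d}$ only when $Y\asymp X$. For $Y=X^{1/2}$ it is $qX^{-(d+1)/2}$, so the claim that any $Y\le X^{1-\eta}$ works is false. Meanwhile the right-hand side of the lemma is never smaller than $X^{1+\ve-\frac{1}{d(d-1)}}$. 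Hence the error $O(Y)$ forces $Y\ll X^{1-\frac{1}{d(d-1)}+\ve}$, and then $qX^{-1}Y^{1-d}\gg qX^{-d}X^{\frac1d-(d-1)\ve}$. That is a genuine power loss for large $q$, for example $q\asymp X^{d-1}$. Your remedy of shifts $y\le X^{1-\ve}$ ``up to $X^{\ve}$ losses'' fails at the first line, since the $O(X^{1-\ve})$ error already exceeds the target.

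The fix is Vaughan's device. For every $1\le y\le X$ one has exactly $\sum_{1\le x\le X}e(f(x))=\sum_{1-y\le u\le X-y}e(f(u+y))$, with no error term. Average over $y\le X$ and let $u$ run over $|u|<X$. Remove the constraint $1\le u+y\le X$ by writing its indicator as $\int_0^1 e(\beta(u+y))\overline{\sum_{1\le w\le X}e(\beta w)}\,\d\beta$. This kernel has $L^1$-norm $\ll\log X$. The phase $\beta y$ is absorbed into the linear coefficient, and $\beta u$ is constant in $y$. Your mean-value/spacing inequality then runs with $Y=X$ and no additive error, giving $X^{-1}+q^{-1}+qX^{-d}$ as required before the transference step.
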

\begin{proof}
As a consequence of Vinogradov's mean value theorem, it is known (for example, this is recorded in~\cite[Lemma~2.4]{browning-yamagishi}) that
given any $a \in \ZZ$ and $q \in \NN$ such that  $\gcd(a,q) = 1$, $|\theta - a/q| \leq q^{-2}$ and $q \leq X^d$, we have
$$
\sum_{1 \leq x \leq X} e(f(x))
\ll X^{1+\ve}
\bigl(X^{-1}+ q^{-1}+ q X^{-d}\bigr)^{ \frac{1}{d(d-1)} }.
$$
The condition $q \leq X^d$ is superfluous, because the bound holds trivially if $q > X^d$.
Furthermore, it can be seen from the proof, given in~\cite[Theorem~5.2]{vaughan} and~\cite[Theorem~1.5]{wooleyMVT}, that the implicit constant is independent of
$\theta_{d-1}, \cdots, \theta_{1}$. Thus, the result follows on applying~\cite[Lemma~14.1]{wooley}.
\end{proof}

\begin{remark}
By simply shifting the polynomial $f$, we can see that the bound in this lemma holds even if the range of summation $[1, X]$ is replaced with any other interval of length $\ll X$.
\end{remark}

The following Type II estimate can be obtained in a similar manner to~\cite[Lemma~3.1]{KawaWoo} and~\cite[Lemma~6.6]{liuzhao}, whose methods in turn go back to~\cite[\S~3]{vaughan1}, with Lemma~\ref{lem:Weyl} incorporated into the argument.
We include the proof in Appendix~\ref{A} for completeness.
\begin{lemma}
\label{TypeII}
Let $d\geq3$, and let
\[
f(x)=\theta x^d+\theta_{d-1}x^{d-1}+\cdots+\theta_1x\in\RR[x].
\]
Let $\gamma > 0$, $1\leq Q\leq P^{1/4}$, $P \ll X \leq P$, $P^{1/8} \leq M, N \leq P$ and $P \ll M N \ll P$.
Let us define
$$
\mathcal{S} = \sum_{ N \leq  y <  2 N } \sum_{ \substack{ M \leq x <  2 M \\ \gamma P \leq  x y \leq  X } } \xi_1(x) \xi_2(y) e( f(xy) ),
$$
where $|\xi_1(m)|, |\xi_2(m)| \ll m^{\varepsilon}$ for all $m \in \NN$ and any $\ve > 0$. If $\theta\notin\mathfrak M(Q)$,
then
$$
\mathcal{S} \ll  P^{1 + \ve} Q^{- \frac{1}{2 d (d-1)}},
$$
for any $\ve > 0$.
\end{lemma}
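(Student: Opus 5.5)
The plan is the standard Vaughan-style Type II argument: Cauchy--Schwarz to remove the coefficients, a Weyl-type differencing in the remaining variable, and then Lemma~\ref{lem:Weyl} applied to the resulting sums. First, remove the coupling condition $\gamma P \leq xy \leq X$. It is a hyperbolic region, and I will use the standard device of writing $\mathbbm{1}_{[\gamma P, X]}(xy)$ via a truncated Fourier (Perron-type) integral, or by splitting $y$ into short intervals. This costs only a factor $\ll P^{\ve}$ and replaces $\xi_1(x)$ by $\xi_1(x)x^{it}$, with the bound $|\xi_1(x)x^{it}| \ll x^{\ve}$ unchanged.

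Next apply Cauchy--Schwarz over $x$:
\[
|\mathcal{S}|^2 \ll M^{1+\ve} \sum_{M\leq x<2M} \Bigl|\sum_{N\leq y<2N} \xi_2(y) e(f(xy))\Bigr|^2 .
\]
Expand the square to get pairs $y_1,y_2$ and the phase $f(xy_1)-f(xy_2)$. As a polynomial in $x$, this phase has leading coefficient $\theta(y_1^d-y_2^d)$, and its lower coefficients are irrelevant because the implied constant in Lemma~\ref{lem:Weyl} is independent of them. Thus
\[
|\mathcal{S}|^2 \ll P^{\ve} M \sum_{y_1,y_2} \Bigl|\sum_{x} e\bigl(\theta(y_1^d-y_2^d)x^d+\cdots\bigr)\Bigr| .
\]
The diagonal $y_1=y_2$ contributes $\ll P^{\ve}M^2N$, which is acceptable because $N\geq P^{1/8}\geq Q^{1/2}$. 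For $h=y_1^d-y_2^d\neq 0$ we have $|h|\ll N^d$, and the number of pairs giving each $h$ is $\ll P^{\ve}$ by the divisor bound. So it suffices to bound $\sum_{0<|h|\ll N^d}\min\bigl(M, |T(h\theta)|\bigr)$, where $T(h\theta)$ is the Weyl sum in $x$ of length $M$.

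For each $h$, choose a Dirichlet approximation $|h\theta - b/r|\leq 1/(rR)$ with $r\leq R$ and $R$ of size about $M^{d}$, and set $\lambda_h = r + M^d|r h\theta - b|$. Lemma~\ref{lem:Weyl} then gives
\[
T(h\theta)\ll M^{1+\ve}\bigl(M^{-1}+\lambda_h^{-1}+\lambda_h M^{-d}\bigr)^{1/(d(d-1))}.
\]
The heart of the argument is to show that, because $\theta\notin\mathfrak{M}(Q)$, only few $h$ can have $\lambda_h$ small. If $\lambda_h\leq \Lambda$ with $\Lambda$ about $Q$, then $\theta$ is within $\Lambda/(r|h|M^d)$ of $b/(rh)$, with denominator $q=r|h|/\gcd(b,h)\ll \Lambda N^d$. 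I then count the $h$ for which this happens. I will first try a counting lemma in the spirit of \cite[Lemma~6.6]{liuzhao} and Vaughan's treatment of bilinear sums: $\theta\notin\mathfrak{M}(Q)$ implies that the number of $h\leq H$ with $\|h\theta\|\leq \Lambda/(M^d)$ and $r\leq\Lambda$ is $\ll H\Lambda^{2}Q^{-1}+\ldots$. Any sizable set of such $h$ would produce, by taking differences of the rational approximations, a single approximation $a/q$ with $q\leq Q$ and $|\theta-a/q|\leq Q/(qP^d)$, contradicting $\theta\notin\mathfrak{M}(Q)$; here the hypotheses $Q\leq P^{1/4}$ and $MN\asymp P$ make the exponents match.

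Finally, split the $h$ dyadically according to the size of $\lambda_h$. Those with $\lambda_h\geq Q$ (and $\lambda_h\leq M^dQ^{-1}$) have $|T|\ll M^{1+\ve}Q^{-1/(d(d-1))}$. Those with small $\lambda_h$ are few enough by the counting step that bounding them trivially by $M$ is affordable. Putting the pieces together yields $|\mathcal{S}|^2\ll P^{2+\ve}Q^{-1/(d(d-1))}$, hence the stated exponent $\frac{1}{2d(d-1)}$ after taking square roots. The main obstacle is this counting step: converting ``$h\theta$ well approximable for many $h$'' into ``$\theta\in\mathfrak{M}(Q)$'' with the correct numerology. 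I would follow \cite[Lemma~3.1]{KawaWoo} closely there, possibly swapping the roles of $x$ and $y$ so that the Weyl sum has the longer of the two lengths $M,N$.
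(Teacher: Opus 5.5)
There is a genuine gap. Your overall architecture matches the paper: Cauchy--Schwarz in $x$, Lemma~\ref{lem:Weyl} on the inner sum with leading coefficient $(y_1^d-y_2^d)\theta$, then $\theta\notin\mathfrak{M}(Q)$ to show that few pairs have small $\lambda$. But the counting step, which you rightly call the heart of the matter, is set up in a way that cannot work.

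The fatal move is passing from pairs $(y_1,y_2)$ to all integers $0<|h|\ll N^d$. Of the $\asymp N^d$ integers in that range, at most $N^2$ have the form $y_1^d-y_2^d$ with $N\leq y_1,y_2<2N$. Suppose every $h$ had $\lambda_h\geq Q$, so that $|T(h\theta)|\ll M^{1+\ve}Q^{-1/(d(d-1))}$. Summing over all $h$ still gives $|\mathcal{S}|^2\ll P^{2+\ve}N^{d-2}Q^{-1/(d(d-1))}$, which is worse than the trivial bound because $N\geq P^{1/8}$.

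Your proposed counting lemma has the same defect. A statement about $h$ in a full interval of length $H\asymp N^d$ can at best bound the bad $h$ by a proportion of $H$, and this is sharp in general. If $\theta$ lies extremely close to $a/q$ with $q$ a prime just above $Q$, every multiple of $q$ has $\lambda_h\ll 1$, giving $\asymp N^d/Q$ bad $h$. That exceeds $N^2$ once $N^{d-2}>Q$, so it says nothing about how many of the relevant values are bad. As you observe yourself, a small $\lambda_h$ on its own only yields an approximation to $\theta$ with denominator up to $QN^d$, which does not contradict $\theta\notin\mathfrak{M}(Q)$.

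The missing idea is to use the multiplicative structure of $y_1^d-y_2^d$, as the paper does following \cite[Lemma~3.1]{KawaWoo}.

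\begin{enumerate}
\item Put $g=\gcd(y_1,y_2)$, $h=(y_1-y_2)/g$ and $k=(y_1^d-y_2^d)/(g^{d-1}(y_1-y_2))\asymp N^{d-1}g^{1-d}$. Then $y_1^d-y_2^d=g^dhk$.
\item Discard pairs with $q>\eta_1Q$ or $M^d|q(y_1^d-y_2^d)\theta-a|>\eta_1Q$; these trivially meet the target.
\item For the remaining pairs, a suitable Dirichlet approximation $b/r$ to $g^dh\theta$ satisfies $a/q=kb/r$. Hence $\lambda_{y_1,y_2}\gg(r+M^dN^{d-1}g^{1-d}|rg^dh\theta-b|)/\gcd(k,r)$.
\item The factor $\gcd(k,r)^{1/(d(d-1))}$ costs only $P^{\ve}$ on average over $y_2/g$, by the divisor argument behind \cite[(3.9)]{KawaWoo}.
\item Repeating this with an approximation $c/s$ to $g^d\theta$ gives $b/r=hc/s$, which lets one sum over $h$.
\item Finally, writing $c'/s'$ for $c/(sg^d)$ in lowest terms, one has $s+M^dN^dg^{-d}|sg^d\theta-c|\gg g^{-d}(s'+P^d|s'\theta-c'|)>g^{-d}Q$. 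This is exactly where $\theta\notin\mathfrak{M}(Q)$ and $MN\asymp P$ enter, and the remaining sum over $g$ converges.
\end{enumerate}

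Your proposal defers precisely this step to the reference, while the replacement you sketch would fail.

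Two minor points. The Perron step is unnecessary: after Cauchy--Schwarz in $x$, the $x$-range for fixed $(y_1,y_2)$ is an interval, and the remark following Lemma~\ref{lem:Weyl} applies. Also, taking the Dirichlet parameter $M^{d-1}$ instead of $M^d$ gives $\lambda M^{-d}\ll M^{-1}$ automatically, so large $\lambda_h$ need no separate treatment.
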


Next we prove the following estimate for Weyl sums over primes.
\begin{lemma}\label{lem:prime-Weyl}
Let $d\geq3$, and let
\[
f(x)=\theta x^d+\theta_{d-1}x^{d-1}+\cdots+\theta_1x\in\RR[x].
\]
Let $\gamma > 0$.
If $\theta\notin\mathfrak M(Q)$, $1\leq Q\leq P^{1/4}$ and $P \ll X \leq P$, then
\begin{equation*}
\sum_{\gamma P \leq x\leq X}\Lambda(x)e(f(x)) \ll P^{1+\ve}Q^{- \frac{1}{2d(d-1)} },
\end{equation*}
for any $\ve > 0$.
\end{lemma}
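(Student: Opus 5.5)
We will derive the estimate from Vaughan's identity combined with the Type~II bound of Lemma~\ref{TypeII} and the Weyl-sum bound of Lemma~\ref{lem:Weyl}. The sum has length $\ll P$ and $f$ has degree $d$, so Vaughan's identity applies. We take parameters $U=V=P^{1/4}$, say, and decompose $\Lambda(x)$ on $\gamma P\leq x\leq X$. After splitting the variables into dyadic ranges, this writes the sum as $O((\log P)^2)$ pieces of two kinds.

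\emph{Type I pieces} have the form
\[
\sum_{m\sim M}a_m\sum_{\substack{\gamma P\leq mn\leq X}} b(n)\, e(f(mn)),
\]
with $M\leq P^{1/2}$, $|a_m|\ll m^\ve$, and $b(n)$ equal to $1$ or $\log n$. The factor $\log n$ is removed by partial summation. \emph{Type II pieces} are bilinear sums with $M,N$ in a range like $[P^{1/4},P^{3/4}]$ and divisor-bounded coefficients. The Type~II pieces fit directly into Lemma~\ref{TypeII}, after adjusting the dyadic ranges so that $P^{1/8}\leq M,N$ and $MN\asymp P$. The small portions where $MN$ is not $\asymp P$ are handled by observing that $mn\geq \gamma P$ forces $MN\gg P$. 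They give $\ll P^{1+\ve}Q^{-1/(2d(d-1))}$.

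For the Type~I pieces we fix $m$ and consider $g_m(n)=f(mn)$. This is a polynomial in $n$ with leading coefficient $\theta m^d$, summed over an interval of length $X_m\asymp P/m$. We want to apply Lemma~\ref{lem:Weyl}, which needs a rational approximation of $\theta m^d$. Since $\theta\notin\mathfrak M(Q)$, Dirichlet gives $q\leq P^{d}/Q$ with $|q\theta-a|\leq Q/P^d$, and necessarily $q> Q$ or $P^d|q\theta-a|>Q$. We then aim to average over $m$: apply Lemma~\ref{lem:Weyl} for each $m$, with a Dirichlet approximation $b/r$ to $\theta m^d$ at the scale $X_m^d$. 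We then show that $\lambda_m=r+X_m^d|r\theta m^d-b|$ is $\gg Q$ for most $m$, and that the few bad $m$ contribute acceptably.

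The cleaner alternative, which we would try first, follows the standard argument in \cite[Lemma~6.6]{liuzhao} and \cite{KawaWoo}: treat Type~I sums with $M\leq P^{1/4}$ by the same method. If $M$ is small, we instead regroup a Type~I sum as a Type~II sum. Concretely, when $M\geq P^{1/8}$ we regard $b(n)$ as a coefficient and apply Lemma~\ref{TypeII}. When $M<P^{1/8}$ we use Lemma~\ref{lem:Weyl} directly for each $m$. Here $\theta m^d$ has approximation $am^d/q$ with denominator $q'=q/\gcd(q,m^d)\geq q/m^d$, and $X_m^d|q'\theta m^d-a'|\asymp (P/m)^d |q\theta -a|m^d/\gcd(q,m^d)$. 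Hence $\lambda_m\gg \lambda/m^{d}$, where $\lambda=q+P^d|q\theta-a|\gg Q$, and the upper bound $\lambda_m\ll \lambda$ keeps the $\lambda X^{-d}$ term under control.

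The main obstacle is this loss of $m^d$ in the Type~I range. With $M<P^{1/8}$ the loss is up to $P^{d/8}$, which may exceed $Q$. To avoid it, we would choose Vaughan's parameters so that Type~I sums only have $M\leq Q^{1/(2d)}$, say $U=Q^{1/(2d)}$ up to the ranges allowed. The remaining ranges $M\geq Q^{1/(2d)}$ then go into Type~II; since $Q\leq P^{1/4}$, Lemma~\ref{TypeII} covers them, possibly after padding a dyadic range to reach $P^{1/8}$ by merging variables. In the Type~I range we get $\lambda_m\gg Q^{1/2}$ and
\[
\sum_{m\leq M}\frac{P^{1+\ve}}{m}\,Q^{-\frac{1}{2d(d-1)}}\ll P^{1+\ve}Q^{-\frac{1}{2d(d-1)}}.
\]
Summing the $O((\log P)^2)$ pieces then yields the lemma.
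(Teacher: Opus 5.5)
There is a genuine gap in the Type~II step of your final plan. To confine the Type~I sums to $m\leq Q^{1/(2d)}$, Vaughan's parameters must satisfy $UV\leq Q^{1/(2d)}$, so $\min(U,V)\leq Q^{1/(4d)}$. The Type~II sums then run over all $m>U$, $k>V$ with $mk\asymp P$, so they contain pieces in which one variable has size about $Q^{1/(4d)}$.

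That size is far below $P^{1/8}$, and it is $O(1)$ when $Q$ is bounded; the lemma allows any $1\leq Q\leq P^{1/4}$. Lemma~\ref{TypeII} explicitly requires $P^{1/8}\leq M,N$. The hypothesis $Q\leq P^{1/4}$ there is a condition on $Q$, not on the ranges, so it does not ``cover'' these pieces.

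Padding by merging variables does not rescue this either. In $\sum_m\Lambda(m)\sum_k b(k)e(f(mk))$ with $m<P^{1/8}$, the only way to lengthen the short variable is to open $b(k)=\sum_{e\mid k,\,e\leq U}\mu(e)$. That turns the piece back into a Type~I sum whose outer variable runs up to $P^{1/8}U$, which is exactly the range you were trying to avoid.

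More generally, any choice of $U,V$ that keeps both Type~II variables at least $P^{1/8}$ forces Type~I sums with outer variable up to $UV\geq P^{1/4}$. The portion $Q^{1/(2d)}<y<P^{1/8}$ of those sums cannot be handed to Lemma~\ref{TypeII}. There, as you observed, the pointwise bound $\gcd(q,y^d)\leq y^d$ destroys the saving.

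The missing idea is the one you mention (``$\lambda_m\gg Q$ for most $m$'') but never carry out. Keep $U=P^{1/8}$, apply Lemma~\ref{lem:Weyl} for each $y\leq U^2$ with $\lambda_y=(q+P^d|q\theta-a|)/\gcd(q,y^d)$, and average the gcd over dyadic ranges of $y$. Two facts make this work:
\begin{itemize}
\item $\#\{y\in[M,2M):\ell\mid y^d\}\ll M\ell^{-1/d}+1$;
\item only $\ell\leq(2M)^d$ occur.
\end{itemize}
Hence, for $0<\sigma\leq 1/d$,
\[
\sum_{M\leq y<2M}\lambda_y^{-\sigma}\ll Q^{-\sigma}\sum_{\ell\mid q}\ell^{\sigma}\,\#\{y\in[M,2M):\ \gcd(q,y^d)=\ell\}\ll MQ^{-\sigma}\tau(q),
\]
so the gcd costs only $P^{\varepsilon}$ on average.

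For this to work all the way up to $y\leq P^{1/4}$, you also need the Dirichlet approximation at level $q\leq P^{d/2}$, $|q\theta-a|\leq P^{-d/2}$. This gives $Q<\lambda\leq 2P^{d/2}$ and $\lambda_y\,y^dX^{-d}\ll P^{-d/4}$. With your level $q\leq P^d/Q$, the third term in Lemma~\ref{lem:Weyl} is only bounded by $y^d/Q$, which is not small once $y>Q^{1/d}$.

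Your other steps are sound: the Type~II pieces with $U=V=P^{1/4}$, and converting Type~I pieces with $M\geq P^{1/8}$ into Type~II sums, both work.
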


\begin{proof}
By Dirichlet's theorem, there exist $a\in\ZZ$ and $q \in \NN$ such that $\gcd(a,q)=1$,
\[
1\leq q\leq P^{\frac{d}{2}}
\qquad
\textnormal{and}
\qquad
|q\theta-a|\leq P^{- \frac{d}{2}}.
\]
Since $\theta\notin\mathfrak M(Q)$, we have
\begin{equation}\label{eq:mixed-height}
 Q<q+P^d|q\theta-a|\leq2P^{\frac{d}{2}}.
\end{equation}
Let us define
\[
 A(y)=\sum_{\substack{rs=y\\ 1 \leq r,s\leq U}}\mu(r)\Lambda(s)
\qquad
\textnormal{and}
\qquad
 B(y)=\sum_{\substack{r\mid y\\ 1 \leq  r\leq U}}\mu(r),
\]
where $U=P^{1/8}$. We apply Vaughan's identity~\cite[Section~24]{davenport} to decompose the sum into Type I and Type II sums as follows
\begin{equation}
\label{SSS+++}
\sum_{\gamma P \leq x \leq X}\Lambda(x)e(f(x)) = T_{\mathrm{I}}-T_{\mathrm{II}},
\end{equation}
where
\[
 T_{\mathrm{I}}=\sum_{1 \leq y \leq U}\mu(y)
 \sum_{ \gamma P/y \leq x \leq X/y   }(\log x)e(f(xy))- \sum_{1 \leq y \leq U^2} A(y)
 \sum_{ \gamma P/y \leq x \leq X/y  }e(f(xy))
\]
and
\[
T_{\mathrm{II}}=\sum_{ U <  y  \leq X } B(y) \sum_{  \substack{ U <  x \leq X /y   \\  \gamma P \leq xy  } }\Lambda(x)e(f(xy)).
\]
Here $|A(y)|\leq\log y$ and $|B(y)|\leq\tau(y)$, where $\tau(y)$ is the number of divisors of $y$.

For each $1 \leq y \leq U^2$, write $ay^d / q = b_y/r_y$ with $r_y \geq 1$ and $\gcd(b_y,r_y)=1$.
Let
\[
 \lambda_y = r_y +   \frac{ P^d |r_y  y^d \theta - b_y |}{y^d} =\frac{q+P^d|q\theta-a|}{\gcd(q,y^d)}.
\]
Let $0 < \sigma \leq 1 / d$ and $\iota \in \{1, 2 \}$. It follows from \eqref{eq:mixed-height} that
\begin{equation}\label{eq:mixed-height-moments1--}
\lambda_y \leq \frac{q  +   P^{d} |q \theta  - a|  }{\gcd(q, y^d)} \leq 2 P^{\frac{d}{2}}.
\end{equation}
Similarly, since it follows from \eqref{eq:mixed-height} that
$$
\frac{Q}{2} < q \qquad \textnormal{or} \qquad \frac{Q}{2} < P^d |q  \theta  - a|,
$$
we have
\begin{eqnarray}
\label{eq:mixed-height-moments2--}
\sum_{ M \leq y < 2 M }\lambda_y^{-\sigma}
&\ll&
\sum_{ M \leq y < 2 M } \frac{\gcd(q, y^d)^{\sigma}}{ Q^{\sigma} }
\\
\notag
&\ll&
\sum_{\ell |  q}  Q^{- \sigma}  \sum_{g | \ell}  \sum_{ \substack{ M \leq y < 2 M  \\ \gcd(q, y^d) = \ell  \\ \gcd(q, y) = g  } }
\ell^{\sigma}
\\
\notag
&\ll&
\tau(q) \sum_{\ell |  q}  Q^{- \sigma} \ell^{\sigma} \frac{M}{ \ell^{1/d} }
\\
\notag
&\ll&
M Q^{- \sigma} \tau(q)^2.
\end{eqnarray}
Therefore, by Lemma~\ref{lem:Weyl} (with partial summation when $\iota = 1$), \eqref{eq:mixed-height-moments1--} and \eqref{eq:mixed-height-moments2--}, we obtain
\begin{eqnarray}
\label{typeI}
T_{\mathrm{I}} &\ll&  \sum_{\iota \in \{1, 2\} } (\log P) \sum_{1 \leq y \leq U^{\iota}}
\left|   \sum_{ \gamma P /y \leq x \leq X/y} (\log x)^{2 - \iota}  e(f(xy)) \right|
\\
\notag
&\ll&
(\log P)^2 \sum_{1 \leq y \leq U^2 }
\frac{X^{1+ \varepsilon}}{ y^{1 + \ve }}  \left(   \frac{y}{X} + \lambda_y^{-1}+\lambda_y \frac{y^d}{X^d}  \right)^{ \frac{1}{d(d-1)}}
\\
\notag
&\ll&
(\log P)^2
X^{1 - \frac{1}{d(d-1)} +  \ve} U^{ \frac{ 2}{d(d-1)}  }
\\
\notag
&&
+ (\log P)^3  X^{1 + \varepsilon}
\sup_{ \substack{  [M, 2M) \subseteq [1, U^2] \\ M \geq 1 }}
\bigl( Q^{- \frac{1}{d(d-1)}} \tau(q)^2 +
M^{\frac{1}{d-1}} X^{- \frac{1}{d-1}} P^{\frac{1}{2 (d-1)}} \bigr)
\\
\notag
&\ll& P^{1+\ve} \bigl(    Q^{-  \frac{1}{d(d-1)}   } +  P^{-  \frac{1}{4(d-1)} }    \bigr).
\end{eqnarray}

We treat $T_{\mathrm{II}}$ by decomposing it into dyadic sums as follows
\begin{eqnarray*}
T_{\mathrm{II}}
&\ll&
(\log P)^2 \sup \sum_{ \substack{ N \leq y < 2 N  \\  U <  y  \leq X  }} B(y) \sum_{ \substack{  M \leq x < 2 M  \\   U <  x \leq X /y \\ \gamma P \leq x y } }\Lambda(x) e(f(xy)),
\end{eqnarray*}
where the supremum is over $[M,2M)$ and  $[N, 2N)$ with  $U \leq M, N \leq X$ and $P \ll M N \ll P$.
Then, by Lemma~\ref{TypeII}, we obtain
$$
T_{\mathrm{II}} \ll  P^{1 + 2 \ve} Q^{- \frac{1}{2 d (d-1)}}.
$$
Finally, on substituting this estimate and \eqref{typeI} into \eqref{SSS+++}, it follows that
$$
\sum_{ \gamma P  \leq x \leq X}\Lambda(x) e(f(x)) \ll  |T_{\mathrm{I}}| + |T_{\mathrm{II}}|  \ll  P^{1 +  \ve} Q^{- \frac{1}{2 d (d-1)}}.
$$
\end{proof}

\subsection{The pure monomial case}\label{ss:pure}
Let $d\geq3$, let $c\in\ZZ\setminus\{0\}$ be fixed, and let $\Phi \in \RR[x]$ of degree $d$ with the leading coefficient $c \alpha$.
We fix a smooth function $\omega_1: \RR \to \RR$ with $\supp \omega_1 \subseteq (0,1)$. 
\begin{lemma}\label{lem:pure-terminal}
Suppose  $P^{1/100}\leq L\leq P^{d/2}$ and $\alpha\in\mathfrak n(L)$. Then
\begin{equation*}
 \sum_{1 \leq x \leq P} \omega_1 \left( \frac{x}{P} \right) \Lambda(x) \,  e\bigl(\Phi(x)\bigr)
\ll P^{1 + \ve} L^{-  \frac{1}{ 4d^2(d-1)  } },
\end{equation*}
for any $\ve > 0$. Here the implicit constant is independent of the coefficients of $\Phi(x) - c \alpha x^d$.
\end{lemma}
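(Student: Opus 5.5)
The strategy is to reduce to Lemma~\ref{lem:prime-Weyl}, applied to $f = \Phi$ with leading coefficient $\theta = c\alpha$, after removing the smooth weight by partial summation. The one genuine issue is converting the hypothesis $\alpha\notin\mathfrak M(L)$ into a statement $c\alpha\notin\mathfrak M(Q)$ for a suitable $Q \leq P^{1/4}$.

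\textbf{Step 1: removing the weight.} Write
\[
\omega_1(x/P) = -\int_{x/P}^{1}\omega_1'(t)\,\d t .
\]
Since $\supp\omega_1\subseteq(0,1)$, there is a fixed $\gamma>0$ with $\supp\omega_1\subseteq[\gamma,1]$. Interchanging sum and integral, the weighted sum is bounded by $\|\omega_1'\|_{L^1}$ times
\[
\sup_{\gamma P\leq X\leq P}\Bigl|\sum_{\gamma P\leq x\leq X}\Lambda(x)e(\Phi(x))\Bigr| .
\]
For $X<\gamma P$ the sum is empty, and ranges $X$ not of size $\gg P$ also give nothing. Lemma~\ref{lem:prime-Weyl} is uniform in the lower-order coefficients of $\Phi$, so independence from the coefficients of $\Phi(x)-c\alpha x^d$ is automatic.

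\textbf{Step 2: transferring minor arcs from $\alpha$ to $c\alpha$.} Set $Q = L^{1/(2d)}/C$ for a suitable constant $C$ depending on $c$. Note that $Q \leq P^{1/4}$ since $L\leq P^{d/2}$. Suppose for contradiction that $c\alpha\in\mathfrak M(Q)$, so that
\[
|c\alpha - a/q|\leq Q/(qP^d) \quad\text{with } q\leq Q .
\]
Then
\[
\Bigl|\alpha - \frac{a}{cq}\Bigr| \leq \frac{Q}{|c|\,qP^d}.
\]
After reducing $a/(cq)$ to lowest terms $a'/q'$ with $q'\leq |c|q$, we get
\[
\|\alpha-a'/q'\|\leq \frac{|c|\,Q}{q'P^d}.
\]
Hence $\alpha\in\mathfrak M(|c|Q)\subseteq\mathfrak M(L)$ once $|c|Q\leq L$, which contradicts $\alpha\in\mathfrak n(L)$. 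In fact this argument permits $Q\asymp L/|c|$ capped at $P^{1/4}$, so the loss in exponent arises only from the cap $Q\leq P^{1/4}$. Taking $Q=\min(L/|c|,P^{1/4})\gg L^{1/(2d)}$, which is valid because $L\leq P^{d/2}$, Lemma~\ref{lem:prime-Weyl} gives
\[
P^{1+\ve}Q^{-1/(2d(d-1))}\ll P^{1+\ve}L^{-1/(4d^2(d-1))}.
\]

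\textbf{Main obstacle.} The only care needed is the bookkeeping in Step~2 when reducing the fraction $a/(cq)$ to lowest terms, together with checking that the cap $Q\leq P^{1/4}$ required by Lemma~\ref{lem:prime-Weyl} costs exactly the factor $1/(2d)$ in the exponent. This loss is precisely what yields the stated exponent $1/(4d^2(d-1))$. Everything else is a direct invocation of the earlier lemmas.
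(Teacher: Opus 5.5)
Your proposal is correct and follows essentially the paper's proof. Both remove the weight by partial summation, use that $\alpha\notin\mathfrak M(L)$ forces $c\alpha\notin\mathfrak M(L/|c|)$ (you verify this explicitly, while the paper just asserts it), and apply Lemma~\ref{lem:prime-Weyl} with $Q=\min\{L/|c|,P^{1/4}\}\gg L^{1/(2d)}$; the paper phrases this last step as a case split on whether $L\leq |c|P^{1/4}$, which is the same computation.
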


\begin{proof}
Since $\alpha\notin\mathfrak M(L)$, we have $c \alpha \notin \mathfrak M(L/|c|)$.
If $P^{1/4} \leq L/|c|$, then it follows that $c \alpha \notin \mathfrak M(P^{1/4})$.
Let $Q = \min\{L/|c|,P^{1/4}\}$. Let $\gamma > 0$ be sufficiently small.
Therefore, by partial summation and Lemma~\ref{lem:prime-Weyl}, we have
\begin{eqnarray*}
\left| \sum_{1 \leq x \leq P} \omega_1 \left( \frac{x}{P} \right) \Lambda(x) \,  e\bigl(\Phi(x)\bigr) \right|
&=&
P^{-1} \left| \int_{1}^P  \omega_1' \left( \frac{t}{P} \right)   \sum_{\gamma P  \leq x \leq t}  \Lambda(x) \,   e\bigl(\Phi(x)\bigr) \d t \right|
\\
&\ll&
P^{1 + \ve} Q^{- \frac{1}{2d(d-1)}}
\\
&\ll&
\begin{cases}
P^{1 + \ve} L^{- \frac{1}{2d(d-1)}} & \mbox{if } P^{1/100}  \leq L  \leq |c|P^{1/4},  \\
P^{ 1 + \ve  -  \frac{1}{8d(d-1)}} & \mbox{if } |c|P^{1/4}< L \leq P^{d/2}.
\end{cases}
\end{eqnarray*}
In either case, we obtain the bound
$$
\ll P^{1 + \ve} L^{-  \frac{1}{ 4d^2(d-1)  } },
$$
which proves the lemma.
\end{proof}

\subsection{The mixed monomial case}\label{ss:mixed}
Let $d\geq4$, let $c\in\ZZ\setminus\{0\}$ be fixed, and let $\Phi\in\RR[x,y]$ of total degree $d$ with $\deg_x \Phi = d - 1$
and $c\alpha$ the coefficient of $x^{d-1}y$.
We fix smooth functions $\omega_1,\omega_2: \RR \to \RR$ with $\supp \omega_1, \supp \omega_2 \subseteq (0, 1)$.  
\begin{lemma}\label{lem:mixed-terminal}
Suppose $P^{1/100}\leq L\leq P^{d/2}$ and $\alpha\in\mathfrak n(L)$.  Then
\begin{equation*}
 \sum_{ 1 \leq  x \leq P }\sum_{ 1 \leq  y \leq P}
 \omega_1 \left( \frac{x}{P} \right) \omega_2 \left( \frac{y}{P} \right) \Lambda(x) \, \Lambda(y) \, e\bigl(\Phi(x,y)\bigr)
 \ll P^{2 + \ve} L^{- \frac{1}{d(d-1)(d-2)} },
\end{equation*}
for any $\ve > 0$. Here the implicit constant is independent of the coefficients of $\Phi(x, y) - c \alpha x^{d-1}y$.
\end{lemma}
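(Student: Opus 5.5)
Fix $y$ and view $\Phi(x,y)$ as a polynomial in $x$ of degree $d-1$ whose leading coefficient is $c\alpha y + \phi(y)$. Here $\phi(y)$ is determined by the other coefficients of $\Phi$. Since $\deg_x\Phi = d-1$ and the total degree is $d$, the coefficient of $x^{d-1}$ has the form $c\alpha y + \beta$ for a constant $\beta$ independent of $y$.

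For each fixed $y$, the inner sum over $x$ is a prime Weyl sum of degree $d-1\geq 3$ with leading coefficient $\theta_y = c\alpha y+\beta$. By partial summation in $\omega_1$, as in the proof of Lemma~\ref{lem:pure-terminal}, Lemma~\ref{lem:prime-Weyl} gives
\[
\sum_x \omega_1(x/P)\Lambda(x)e(\Phi(x,y)) \ll P^{1+\ve} Q^{-\frac{1}{2(d-1)(d-2)}}
\]
whenever $\theta_y\notin\mathfrak M(Q)$ with $Q\leq P^{1/4}$. The bound is uniform in the lower-order coefficients. Write $\mathfrak M_{d-1}(Q)$ for the major arcs with $P^{d-1}$ in place of $P^d$, since the Weyl sum is now of degree $d-1$.

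The task is therefore to bound the number of primes $y\leq P$ for which $c\alpha y+\beta \in \mathfrak M_{d-1}(Q)$. Choose $Q = L^{\kappa}$ with $\kappa$ small (and $Q\leq P^{1/4}$). We need this exceptional set to have size $\ll P L^{-\eta}$. Trivially bounding the inner sum by $P$ for exceptional $y$ then gives a total bound
\[
P^{2+\ve}\bigl(Q^{-\frac{1}{2(d-1)(d-2)}} + L^{-\eta}\bigr).
\]

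To count the exceptional $y$, suppose $y_1,y_2$ are both exceptional. Then $c\alpha(y_1-y_2)$ is within $\ll Q/P^{d-1}$ of a rational with denominator $\leq Q^2$. Fix one exceptional $y_0$ and consider the differences $h=y-y_0$, with $|h|\leq P$. Each such $h$ satisfies $\|q h c\alpha\| \ll Q^3P^{1-d}$ for some $q\leq Q^2$.

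A standard counting lemma (the Dirichlet-type count of $h\leq H$ with $\|h\gamma\|$ small, e.g.\ via the large sieve or Davenport's lemma) shows the following. Either the number of such $h$ is $\ll P Q^{C}L^{-1/2}$, or else $\alpha$ has a rational approximation $a/r$ with $r\ll Q^{C}$ and $|\alpha-a/r|\ll Q^C P^{-d}$. The latter would put $\alpha\in\mathfrak M(L)$ once $Q^C$ is small compared with $L$, a contradiction.

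Concretely, take $m=qhc$ with $|m|\ll Q^2 P$, so that $\|m\alpha\|\ll Q^3P^{1-d}$. If there are many such $m$, pigeonhole two of them to obtain $r=|m-m'|\ll Q^2P$ with $\|r\alpha\|\ll Q^3P^{1-d}$. Lifting from $r\ll P$ to the scale $P^d$ is the delicate point. This step is the main obstacle: passing from "many multiples $m\alpha$ close to integers" to a genuine approximation of $\alpha$ at scale $P^{-d}$ with small denominator.

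I would first try a Vinogradov-type lemma: if $\|m\alpha\|\leq \delta$ for more than $\gg \delta M + M/R + 1$ values of $m\leq M$ (up to $Q$ factors), then $\alpha$ has an approximation with $r\ll R$. Applying this with $M=Q^2P$ and $\delta=Q^3P^{1-d}$ gives that either the count is $\ll Q^{O(1)}(P/L^{1/2}+\dots)$, or else $|\alpha-a/r|\ll Q^{O(1)}/(rP^d)$ with $r\ll Q^{O(1)}$. If the lemma only yields the weaker bound $|\alpha - a/r|\ll Q^{O(1)}/(rMP^{d-1})$, that is still of the right shape, since $M\geq P$.

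Finally, choose $\kappa\asymp 1/d$ and balance the two terms. The exponent $\frac{1}{d(d-1)(d-2)}$ in Lemma~\ref{lem:mixed-terminal} should come out of this balancing, and any losses in the exponent can be absorbed because the target saving is that weak. The cases $L>P^{\text{small}}$ are handled by capping $Q$ at $P^{1/4}$, as in Lemma~\ref{lem:pure-terminal}.
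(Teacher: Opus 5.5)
\textbf{Gap: the balancing step fails for large $L$.} Your only input for the inner $x$-sum is Lemma~\ref{lem:prime-Weyl} in degree $d-1$. That lemma gives $P^{1+\ve}Q^{-\frac{1}{2(d-1)(d-2)}}$, and only for $Q\leq P^{1/4}$. So however small the exceptional set of $y$ is, the non-exceptional $y$ alone leave you with at best $P^{2+\ve-\frac{1}{8(d-1)(d-2)}}$.

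The lemma asks for $P^{2+\ve}L^{-\frac{1}{d(d-1)(d-2)}}$. Matching this needs $Q\geq L^{2/d}$, which is impossible once $L>P^{d/8}$. At $L=P^{d/2}$ the target is $P^{2+\ve-\frac{1}{2(d-1)(d-2)}}$, whose saving exponent is four times what you can reach.

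Capping $Q$ at $P^{1/4}$ therefore does not ``handle'' large $L$. It is exactly the mechanism that forces the weak exponent $\frac{1}{4d^2(d-1)}$ in Lemma~\ref{lem:pure-terminal}. Your route would likewise top out at $L^{-\frac{1}{4d(d-1)(d-2)}}$.

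The balancing also breaks for small $L$ when $d=4$. There you need $Q\geq L^{1/2}$, so an exceptional count of your shape $PQ^{C}L^{-1/2}$ with $C\geq 1$ gives nothing.

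The weaker bound would still be enough for $W(L)\ll P^{t+\ve_0}L^{-\varrho}$ in Theorem~\ref{t:minor}, since $\frac{1}{4d(d-1)(d-2)}>\varrho$. But it does not prove the stated lemma. Your claim that losses can be absorbed is false for the stated exponent.

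\textbf{Second issue: the exceptional count is left open.} You flag the passage from many $m$ with $\|m\alpha\|$ small to an approximation of $\alpha$ at scale $P^{-d}$ as the main obstacle. Pigeonholing alone only gives $r\ll Q^2P$. This inverse step is unnecessary:
\begin{itemize}
\item Since $\alpha\in\mathfrak M(2L)$, you already have $a/q$ with $L/|c|<q+P^d|qc\alpha-a|\ll L$.
\item On each residue class of $y$ modulo $q$, the point $c\alpha y+\beta$ moves modulo $1$ in steps of $qc\alpha-a$.
\item Hence each of the $\ll Q^2$ arcs of $\mathfrak M_{d-1}(Q)$ meets $O(1)$ classes, and catches $\ll 1+QP/L$ values of $y$ in each.
\end{itemize}

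\textbf{How the paper avoids both problems.} It uses a different decomposition.
\begin{itemize}
\item Cauchy--Schwarz with $x$ on the outside discards $\Lambda(x)$ altogether and differences in $y$.
\item The inner sum is then an \emph{unweighted} degree-$(d-1)$ Weyl sum in $x$ with leading coefficient $c\alpha h$, where $h=y_1-y_2$.
\item The approximation $a/q$ is pushed forward to $ah/q$, giving $\lambda_h=(q+hP^{d-1}|qc\alpha-a|)/\gcd(q,h)\ll L$.
\item Lemma~\ref{lem:Weyl} has no $P^{1/4}$ ceiling. Combined with the averages of $\lambda_h^{\pm\sigma}$ over $h$, it gives a saving that keeps growing with $L$ all the way to $P^{d/2}$.
\end{itemize}
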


\begin{proof}
Since $\alpha\in\mathfrak M(2L)$, there exist $a_0 \in \ZZ$ and $q_0 \in \NN$ such that $\gcd(a_0,q_0)=1$, $1\leq q_0\leq2L$ and $P^d|q_0\alpha-a_0|\leq2L$.
Write $ca_0/q_0=a/q$ with $q\geq1$ and $\gcd(a,q)=1$. 
Then it follows that
$$
1 \leq q \leq 2L \qquad \textnormal{and} \qquad P^d|q c \alpha - a|\leq 2 |c| L.
$$
Since $c \alpha \notin \mathfrak M(L/|c|)$, we have
\begin{equation}\label{eq:mixed-height+}
\frac{L}{|c|} < q +  P^d |q c \alpha  - a| \ll L.
\end{equation}
For each $1\leq h\leq P$, write $ah/q = b_h/r_h$ with $r_h\geq1$ and $\gcd(b_h,r_h)=1$, and put
\[
\lambda_h=r_h+P^{d-1}|r_h h c \alpha - b_h|
 =\frac{q  +   hP^{d-1} |q c \alpha   - a|  }{\gcd(q,h)}.
\]

Let $0 < \sigma < 1$. It follows from \eqref{eq:mixed-height+} that
$$
\lambda_h \leq \frac{q  +   P^{d} |q c \alpha   - a|  }{\gcd(q,h)} \ll L.
$$
Therefore, we have
\begin{equation}\label{eq:mixed-height-moments1}
\sum_{1 \leq h\leq P} \lambda_h^\sigma \ll P L^\sigma.
\end{equation}
Similarly, since it follows from \eqref{eq:mixed-height+} that
$$
\frac{L}{2|c|} < q \qquad \textnormal{or} \qquad \frac{L}{2|c|} < P^d |q c \alpha  - a|,
$$
we obtain
\begin{eqnarray}
\label{eq:mixed-height-moments2}
\sum_{1 \leq h\leq P}\lambda_h^{-\sigma}
&\ll&
\sum_{1 \leq h\leq P}
\left( \frac{\gcd(q,h)^{\sigma}}{L^{\sigma}} + \frac{\gcd(q,h)^{\sigma} P^{\sigma}}{ h^{\sigma} L^{\sigma}  } \right)
\\
\notag
&\ll&
\sum_{\ell |  q} \ell^{\sigma} L^{- \sigma} \sum_{ \substack{ 1 \leq h\leq P \\ \gcd(h, q) = \ell  } }
\left( 1 + \frac{ P^{\sigma}}{ h^{\sigma} } \right)
\\
\notag
&\ll&
\sum_{\ell | q } \ell^{\sigma} L^{- \sigma } \frac{P}{\ell }
\\
\notag
&\ll& P L^{-\sigma} \tau(q)
\\
\notag
&\ll& P^{1 + \varepsilon} L^{-\sigma}.
\end{eqnarray}

Let
\[
S = \sum_{ 1 \leq  x \leq P }\sum_{ 1 \leq  y \leq P} \omega_1 \left( \frac{x}{P} \right) \omega_2 \left( \frac{y}{P} \right) \Lambda(x) \, \Lambda(y) \, e\bigl(\Phi(x,y)\bigr).
\]
Let $\gamma > 0$ be sufficiently small.
By the Cauchy--Schwarz inequality, we obtain
\begin{eqnarray*}
|S|^2 &\ll&  P (\log P)^2  \sum_{ \gamma P \leq  x \leq P } \sum_{ 1 \leq  y_1, y_2 \leq P}
\\
&&  \omega_2 \left( \frac{y_1}{P} \right) \omega_2 \left( \frac{y_2}{P} \right) \Lambda(y_1) \, \Lambda(y_2) \, e\bigl(\Phi(x,y_1) - \Phi(x,y_2)\bigr)
\\
&\ll& P^3 (\log P)^4 +  P (\log P)^4 \sum_{ \substack{ 1 \leq y_1,  y_2 \leq P \\ y_1 \neq y_2  } }
\\
&&
\left| \sum_{\gamma P  \leq x \leq P}  e\bigl(  c \alpha (y_1 - y_2) x^{d-1} + \nu(x, y_1, y_2)  \bigr) \right|,
\end{eqnarray*}
where
$$
\nu(x, y_1, y_2) = \Phi(x,y_1) -  c \alpha y_1  x^{d-1}  - \Phi(x,y_2) + c \alpha y_2  x^{d-1}.
$$
In particular, $\deg_x \nu \leq d - 2$. Therefore, it follows from Lemma~\ref{lem:Weyl}, \eqref{eq:mixed-height-moments1} and \eqref{eq:mixed-height-moments2} that
\begin{eqnarray*}
|S|^2 &\ll&  P^3 (\log P)^4 +  P^2 (\log P)^4 \sum_{1 \leq h \leq P} P^{1+ \varepsilon}  \bigl(P^{-1}+\lambda_h^{-1}+\lambda_h P^{-d + 1}\bigr)^{ \frac{1}{(d-1)(d-2)}}
\\
&\ll&
P^{3 + 2 \varepsilon}
\left( P^{1 - \frac{1}{(d-1)(d-2)} } + P^{1 + \varepsilon}  L^{- \frac{1}{(d-1)(d-2)}} + L^{ \frac{1}{(d-1)(d-2)}} P^{ 1 -  \frac{1}{ d-2 }} \right)
\\
&\ll&
P^{4 + 2 \varepsilon} L^{- \frac{2}{d (d-1)(d-2) }},
\end{eqnarray*}
where the final inequality follows on recalling that $P^{1/100}\leq L\leq P^{d/2}$ and $d\geq4$.
The lemma then follows by taking the square roots.
\end{proof}

\section{The minor arcs}
In this section, we prove the following estimate over the minor arcs.
\begin{theorem}
\label{t:minor}
Let $F \in \ZZ[x_1, \ldots, x_n]$ be a homogeneous form of degree $d \geq 4$ with $V_F^{*} = \{ \mathbf{0} \}$.
Suppose
$$
n \geq (24 (d-1)^2 d^2 + 7 (d-1) ) 2^{d} + 11.
$$
Let $S(\alpha)$ be as in \eqref{defS}, and let $Q_0 = P^{1/100}$. Let $\mathfrak{a} \subseteq \RR / \ZZ$ be a measurable set.
Then there exists $\delta > 0$ such that
$$
\int_{ \mathfrak{a} \cap  ((\RR / \ZZ) \setminus \mathfrak{M}(Q_0) ) } S(\alpha) \d \alpha \ll P^{n - d- \delta}.
$$
\end{theorem}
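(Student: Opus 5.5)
Over the minor arcs, I would split $(\RR/\ZZ)\setminus\mathfrak M(Q_0)$ into the dyadic shells $\mathfrak n(L)$, $L=2^{j-1}Q_0$, as in \eqref{dydshell}. Since there are only $O(\log P)$ shells, it suffices to show
\[
\int_{\mathfrak n(L)}|S(\alpha)|\,\d\alpha\ll P^{n-d-2\delta}
\]
uniformly for $Q_0\le L\le P^{d/2}$. On each shell I would combine two inputs.

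The first is a pointwise saving from one or two variables, coming from Lemma~\ref{lem:pure-terminal} or Lemma~\ref{lem:mixed-terminal}. The second is a mean value estimate for the remaining variables in the style of Liu--Zhao. That estimate bounds $\int_{\mathfrak n(L)}|S'(\alpha)|\,\d\alpha$, or a suitable moment, by $P^{n'-d}L^{-\eta}$ with $\eta$ large, using Birch-type Weyl differencing over the integers. Its cost is of order $2^d$ variables times a polynomial in $d$, instead of $4^d$.

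For the structural input, I would use Lemma~\ref{smooth}: after a suitable ordering of variables, $F$ contains a monomial $c x_1^d$ or $c x_1^{d-1}x_2$ with $c\neq0$. Fixing all other variables $\x'$, the inner sum over $x_1$ (respectively over $(x_1,x_2)$) is exactly a sum of the shape treated in Lemma~\ref{lem:pure-terminal} (respectively Lemma~\ref{lem:mixed-terminal}), with leading coefficient $c\alpha$ independent of $\x'$. This gives a saving of $L^{-1/(4d^2(d-1))}$ or $L^{-1/(d(d-1)(d-2))}$, uniformly in $\x'$. This is the step that replaces the second $2^d$ factor.

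The difficulty is that the saving must be combined with an integral over $\mathfrak n(L)$, whose measure is $\ll L^2P^{-d}$, and the trivial bound $L^2P^{-d}\cdot P^{n}L^{-\kappa}$ loses $L^2$. So I would instead apply Hölder in $\alpha$ to separate the structural variables from the rest. Write $\x=(\x_1,\x')$ with $\x_1$ the one or two special variables. The twisted dependence between $\x_1$ and $\x'$ (the monomials of $F$ that mix the two groups) would be handled by Cauchy--Schwarz or Hölder over $\x'$, producing a bound of the form
\[
|S(\alpha)|^{2}\ll\Bigl(\sup_{\x'}|S_1(\alpha;\x')|\Bigr)^{2}\cdot \#\{\x'\}\cdot\sum_{\x'}\bigl|\text{(differenced sum)}\bigr|.
\]
Alternatively, and preferably, one can feed the pointwise bound into Liu--Zhao's mean value argument. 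That argument bounds $\int_{\mathfrak n(L)}|S(\alpha)|\,\d\alpha$ through $\sup_{\alpha\in\mathfrak n(L)}|S(\alpha)|^{1-\theta}$ times a moment over $\mathfrak n(L)$ that is controlled by Birch-type counting of the singular locus of the differenced forms; here $V_F^*=\{\mathbf 0\}$ makes $n-\dim V^*_F=n$. The required inequality for $n$ comes from balancing the $L^{2}$ measure loss against the pointwise saving $L^{-\kappa}$ with $\kappa\asymp 1/d^3$ and the Birch saving $L^{-(n-\dots)/(2^{d-1}(d-1))}$. This is why the condition reads $n\ge(24(d-1)^2d^2+7(d-1))2^d+11$: the $d^2(d-1)^2$ factor is the reciprocal of the Vinogradov-type exponent, and $7(d-1)2^d+11$ is Liu--Zhao's overhead.

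I expect the main obstacle to be exactly this combination step. The pointwise bounds hold for the one- or two-variable sums with all other variables fixed, but in $S(\alpha)$ the other variables carry von Mangoldt weights and are coupled to $x_1$ through lower-order monomials. So a clean factorisation is not available, and the pointwise saving must be inserted inside the Weyl-differenced or mean-value framework without destroying it. My first attempt would be this: in Liu--Zhao's argument, wherever they use the $2^d$-differenced minor arc bound for the prime Weyl sum, substitute the bound $P^{n+\ve}L^{-\kappa}$. That bound is obtained by summing trivially over $\x'$ and applying Lemma~\ref{lem:pure-terminal} or Lemma~\ref{lem:mixed-terminal} to the innermost variables, whose implicit constants are independent of the lower-order coefficients, which depend on $\x'$. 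Then I would track the exponents so that the total power of $L$ is negative once $n$ satisfies the hypothesis, giving $P^{n-d-\delta}$ after summing the $O(\log P)$ shells.
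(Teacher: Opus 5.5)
Your architecture matches the paper's: dyadic shells, Lemma~\ref{smooth}, and Lemmas~\ref{lem:pure-terminal} and~\ref{lem:mixed-terminal} applied to the one or two special variables with the others fixed, uniformly in the lower-order coefficients. The gap is the combination step, which you flag as the main obstacle but leave open. Your description of how Liu--Zhao handle it is also not right. Proposition~\ref{LZ} is not an interpolation $\sup|S|^{1-\theta}\times(\text{moment})$ controlled by $V_F^*$, and the pointwise saving $L^{-\kappa}$, $\kappa\asymp d^{-3}$, is never set against the measure loss.

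The paper applies Proposition~\ref{LZ} with the split $\x=(\y,\z,\w)$, where $\w$ is exactly the special variables and $F=f(\y)+g(\y,\z)+h(\y,\z,\w)$. Since $|\mathfrak n(L)|\ll L^2P^{-d}$, on a shell it reads
\begin{multline*}
\int_{\mathfrak a\cap\mathfrak n(L)}S(\alpha)\,\d\alpha\ll P^{n-d}(\log P)^{2n}\Bigl(L\,X^{1-\frac{\kappa_1}{(d-1)2^d}}+L^{\frac12}X^{\frac32-\frac{\kappa_2}{(d-1)2^{d+1}}}\Bigr)\\
+P^{m+s-d}(\log P)^{m+s}X^2\,W(L),
\end{multline*}
where $m-\dim V_f^*\geq\kappa_1$ and $m+s-\dim V_g^*\geq\kappa_2$. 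The free parameter $X$ is what decouples the two savings. The measure loss $L$ (resp.\ $L^{1/2}$) is absorbed by the Birch-type savings in $X$, while the pointwise bound $W(L)\ll P^{t+\ve}L^{-\varrho}$ only has to beat $X^2$. Taking $X=L^{\varrho/2-\delta_0}$ gives $L^{-\delta}$ per shell once $\kappa_1>(1/\varrho+1/2)(d-1)2^{d+1}$ and $\kappa_2>(1/(2\varrho)+3/4)(d-1)2^{d+2}$ (Lemma~\ref{LEMMA2}). This is where $1/\varrho=4d^2(d-1)$ multiplies Birch's $(d-1)2^d$.

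The second missing piece is verifying these hypotheses. They concern the restricted forms $f(\y)=F(\y,\mathbf 0,\mathbf 0)$ and $g=F(\y,\z,\mathbf 0)-f$, not $F$, so ``$n-\dim V_F^*=n$'' is not the relevant input.
\begin{itemize}
\item For $f$, the paper uses \cite[Lemma~3.1]{yamagishi} to get $\textnormal{codim}\,V_f^*\geq n-2(s+t)$.
\item For $g$, note $V_{F-f}^*\subseteq V_2=\{\partial F/\partial x_i=0,\ i>m\}$. Cutting $V_2$ by the $m$ hypersurfaces $\partial F/\partial x_i=0$ ($i\leq m$) gives $V_F^*=\{\mathbf 0\}$, so $\dim V_2\leq m$. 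Hence $\textnormal{codim}\,V_g^*\geq n-m-2t$, again by \cite[Lemma~3.1]{yamagishi}.
\end{itemize}
Choosing $s$ to balance the two conditions gives $n-4t\geq(6/\varrho+7)(d-1)2^d+3$. With $t\leq2$ this yields the stated threshold, so the $+11$ is $4t+3$, not generic overhead. Without these two ingredients your plan gives neither a bound on $\mathfrak n(L)$ nor the stated condition on $n$.
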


Let us partition the variables $\x$ as
$$
\x = (\y, \z, \w),
$$
where $\y \in \ZZ^{m}$, $\z \in \ZZ^{s}$, $\w \in \ZZ^{t}$ with $m + s + t = n$.
Then $F$ can be uniquely represented as
\begin{eqnarray}
\label{deffg}
F(\y, \z , \w) = f(\y) + g(\y, \z) + h(\y, \z, \w),
\end{eqnarray}
where
$$
\deg_\y g < d \qquad \textnormal{and} \qquad  \deg_{\y, \z} h < d.
$$
Without loss of generality we let $\y = (x_1, \cdots, x_m)$ and $\w = (x_{m+s+1}, \dots, x_n)$.
We define
\begin{eqnarray*}
\mathcal{E}_{\y, \z} (\alpha) = \sum_{ \w \in \ZZ^t } \psi ( \w )   \Lambda(\w)   e (\alpha h(\y, \z, \w)),
\end{eqnarray*}
where
$$
\psi ( \w ) \Lambda(\w) = \prod_{i = 1}^t \psi_{m + s + i} (x_{m+s+i})   \Lambda(x_{m+s+i}).
$$

The following mean value estimate is the key ingredient established in~\cite{liuzhao}.
\begin{proposition}\cite[Proposition~5.1]{liuzhao}
\label{LZ}
Let $S(\alpha)$ be as in \eqref{defS} and $f, g$ as in \eqref{deffg}.
Let $X \leq \frac14 P^{ \frac{d}{2} }$. Let $\kappa_1$ and $\kappa_2$ be real numbers such that
$$
\min (\kappa_1, \kappa_2)  > (d-1) 2^d.
$$
Suppose
$$
m - \dim V_f^*  \geq \kappa_1 \qquad
\textnormal{and}
\qquad
m + s - \dim V_g^*  \geq \kappa_2.
$$
Let $\mathfrak{a} \subseteq \RR / \ZZ$ be a measurable set.
Then we have
\begin{eqnarray*}
\int_{\mathfrak{a}} S(\alpha) \d \alpha  &\ll&  P^{n - \frac{d}{2}}   (\log P)^{2n}  X^{1 - \frac{   \kappa_1 }{ (d-1) 2^d  } }  |\mathfrak{a}|^{\frac12}
\\
&&+ P^{ n - \frac{3d}{4} }   (\log P)^{2n}  X^{\frac{3}{2} - \frac{   \kappa_2 }{ (d-1) 2^{d+1}  } }  |\mathfrak{a}|^{\frac14}
\\
&&+ P^{ m + s - d }   (\log P)^{m + s}  X^{2} \sup_{\alpha \in \mathfrak{a}} \sup_{ (\y, \z) \in  [0, P]^{m + s} \cap \ZZ^{m + s}  } |  \mathcal{E}_{\y, \z} (\alpha) |.
\end{eqnarray*}
\end{proposition}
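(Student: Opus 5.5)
The plan is to follow the three-way dichotomy that the shape of the bound suggests. For fixed $\alpha$, the $\y$-variables, with $f$ of high rank, force $\alpha$ close to a rational of small height unless the $\y$-sum is small. The same holds for the $(\y,\z)$-variables, with $g$ of high rank. Only when both fail is $\alpha$ in $\mathfrak{M}(X)$, and there the $\w$-sum is estimated trivially by its supremum.

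\emph{Step 1 (Weyl inequalities with prime weights).} For fixed $(\z,\w)$ consider $T_{\z,\w}(\alpha)=\sum_{\y}\psi\Lambda(\y)e(\alpha F(\y,\z,\w))$. By Cauchy--Schwarz over $\y$, using $\Lambda\leq\log P$, I would remove the von Mangoldt weights in the standard way. I then apply Weyl differencing in the form of Birch's lemma to the $\y$-variables. The leading part in $\y$ is $f$, since $g$ and $h$ have lower $\y$-degree, and the lower-degree terms are invisible after $d-1$ differencings.

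The target is the dichotomy in shells $\mathfrak n(L)$: either $|T_{\z,\w}(\alpha)|^2\ll P^{2m+\ve}(\log P)^{2m}L^{-2\kappa_1/((d-1)2^d)}$, or $\alpha$ lies in $\mathfrak M(L)$. This uses $m-\dim V_f^*\geq\kappa_1$. The same argument applied to $U_{\w}(\alpha)=\sum_{\y,\z}\psi\Lambda(\y,\z)e(\alpha F)$, with one further Cauchy--Schwarz, gives the analogous statement with $g$ and $\kappa_2$. There the loss is $2^{d+1}$ in the denominator rather than $2^d$, because of the extra squaring.

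\emph{Step 2 (splitting $\mathfrak a$).} I cover $\mathfrak a$ by $\mathfrak a\cap\mathfrak M(X)$ and by the shells $\mathfrak a\cap\mathfrak n(L)$ with $X\leq L\leq P^{d/2}$, using the decomposition \eqref{dydshell} started at $X$.

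On the shells, for the first term I bound $\int_{\mathfrak a\cap\mathfrak n(L)}|S|$ by Cauchy--Schwarz as $|\mathfrak a|^{1/2}\bigl(\int_{\mathfrak n(L)}|S|^2\bigr)^{1/2}$. I then write $|S|^2\leq P^{n-m}(\log P)^{2(n-m)}\sum_{\z,\w}|T_{\z,\w}|^2$ and insert Step 1 together with $|\mathfrak n(L)|\ll L^2P^{-d}$ from \eqref{measnL}. This gives
\[
P^{\,n-\frac d2}(\log P)^{2n}L^{1-\frac{\kappa_1}{(d-1)2^d}}|\mathfrak a|^{\frac12}.
\]
Since $\kappa_1>(d-1)2^d$, the exponent of $L$ is negative, so the dyadic sum over $L\geq X$ is dominated by $L=X$.

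For the portion where the $f$-dichotomy fails but the $g$-dichotomy still applies, I use H\"older with exponents $(4,4/3)$ instead. Combined with the fourth-moment version of Step 1 and the measure of the shell, this produces the second term, with $|\mathfrak a|^{1/4}$ and the exponent $\frac32-\frac{\kappa_2}{(d-1)2^{d+1}}$. This is again convergent at $L=X$ because $\kappa_2>(d-1)2^d$.

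\emph{Step 3 (the residual major set).} On $\mathfrak a\cap\mathfrak M(X)$, with $X\leq\frac14P^{d/2}$ so that the arcs are disjoint, I bound $S$ trivially in $(\y,\z)$:
\[
|S(\alpha)|\leq P^{m+s}(\log P)^{m+s}\sup_{\y,\z}|\mathcal E_{\y,\z}(\alpha)|.
\]
Multiplying by $|\mathfrak M(X)|\ll X^2P^{-d}$ gives the third term.

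\emph{Main obstacle.} I expect the hardest step to be Step 1. Birch's inequality has to be run while carrying the prime weights and the lower-order polynomial $g+h$, whose coefficients depend on $(\z,\w)$, and it must deliver a bound uniform in those coefficients with exactly the exponents $\kappa_i/((d-1)2^d)$. The second point is arranging the H\"older step so that the $g$-estimate produces precisely $|\mathfrak a|^{1/4}$ and $X^{3/2}$. For this I would first try the Cauchy--Schwarz-to-remove-$\Lambda$ followed by Birch's Lemma 2.1 approach, as in Liu--Zhao.
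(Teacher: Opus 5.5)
There is a genuine gap. It lies in Step~1, and the rest of your proposal depends on it.

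\textbf{Why Step~1 fails.} At a fixed $\alpha$, Cauchy--Schwarz over $\y$ cannot remove the von Mangoldt weights from $T_{\z,\w}(\alpha)$. It only gives $|T_{\z,\w}(\alpha)|^2\leq\sum_{\y}(\psi\Lambda(\y))^2\cdot\#\{\y\}$, which discards the phase. If instead you Weyl-difference with the weights kept, after $d-1$ steps the innermost sum is linear in $\y$ but weighted by products of $\Lambda$ over a cube of shifts. That is not a geometric series, so Birch's counting argument never starts.

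The bound you claim, $|T_{\z,\w}(\alpha)|\ll P^{m+\ve}L^{-\kappa_1/((d-1)2^d)}$ on $\mathfrak n(L)$ under only $m-\dim V_f^*\geq\kappa_1$, is a Birch-strength Weyl inequality for prime variables. No such estimate is known. With $m=n$ it would bound the minor arcs of $S$ as soon as $n-\dim V_F^*>(d-1)2^{d+1}$, far beyond Theorem~\ref{t:main}. Pointwise bounds for prime-weighted sums are exactly what costs Liu and Zhao their second factor $2^d$. They are also what Section~\ref{s:terminal} is built to supply, and only for the single sum $\mathcal E_{\y,\z}$.

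Your Step~2 shows the symptom. On $\mathfrak n(L)$ one has $\alpha\notin\mathfrak M(L)$, so your $f$-alternative always applies and the region ``where the $f$-dichotomy fails'' is empty. Your argument therefore never uses $g$ or $\kappa_2$. The H\"older step also presupposes a pointwise prime bound, now in $(\y,\z)$.

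\textbf{The missing idea.} Apply Cauchy--Schwarz to the \emph{integrated} quantity, so that the dichotomy falls on a difference of two frequencies rather than on $\alpha$.

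First write $\int_{\mathfrak a}S\,\d\alpha=\sum_{\y}\psi\Lambda(\y)\int_{\mathfrak a}e(\alpha f(\y))G_{\y}(\alpha)\,\d\alpha$, where $G_{\y}$ is the $(\z,\w)$-sum. Cauchy--Schwarz in $\y$ costs $P^m(\log P)^m$. It leaves $\int_{\mathfrak a}\int_{\mathfrak a}\sum_{\y}\tilde\omega(\y)e((\alpha-\beta)f(\y))G_{\y}(\alpha)\overline{G_{\y}(\beta)}\,\d\alpha\,\d\beta$ with a smooth weight $\tilde\omega$. After expanding $G_{\y}$, this is a sum over integer $\y$ whose top-degree part is $(\alpha-\beta)f(\y)$, because $\deg_{\y}g,\deg_{\y}h<d$. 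So Birch's inequality applies, uniformly in the lower-order terms.

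The range $\alpha-\beta\notin\mathfrak M(X)$ gives the first term, and $|\mathfrak a|^{1/2}$ comes from the remaining free variable. The range $\alpha-\beta\in\mathfrak M(X)$ is at most $|\mathfrak M(X)|\,A$, where $A=\int_{\mathfrak a}\sum_{\y}\tilde\omega(\y)|G_{\y}(\alpha)|^2\,\d\alpha$.

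Now repeat the trick for $A$ over $(\y,\z)$, where the top-degree part is $(\alpha-\beta)g(\y,\z)$. Its off-diagonal part $B$ leads to the second term, which explains the $|\mathfrak a|^{1/4}$ and the $X^{3/2}$. Its near-diagonal part is at most $\sup|\mathcal E_{\y,\z}|^2P^{s}|\mathfrak M(X)|A$. This yields the self-bounding inequality $A^2\ll P^{m+s}(\log P)^{s}\bigl(B+P^{s-d}X^2\sup|\mathcal E_{\y,\z}|^2A\bigr)$. Solving it gives exactly the third term $P^{m+s-d}X^2\sup|\mathcal E_{\y,\z}|$.

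So the third term comes from these near-diagonal regions, not from $\alpha\in\mathfrak M(X)$. Your Step~3 is true, but it is not how that term arises.
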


\begin{remark}
The assumption that $F$ defines a smooth hypersurface is not necessary for this proposition.
Also, the weights of the exponential sum $S(\alpha)$ satisfy
$$
| \psi_i(x) \Lambda (x) | \leq \log x \qquad (x \in [1,P] \cap \ZZ)
$$
for all $1 \leq i \leq n$, which is sufficient to apply~\cite[Proposition~5.1]{liuzhao} $($see~\cite[pp.8625]{liuzhao}$)$.
\end{remark}

Let us define
$$
\varrho = \frac{1}{4 d^2 (d-1)}.
$$
Given $L \geq 1$, let
$$
W(L) = \sup_{\alpha \in \mathfrak{n}(L) } \sup_{ (\y, \z) \in  [0, P]^{m + s} \cap \ZZ^{m + s}  } |  \mathcal{E}_{\y, \z} (\alpha) |,
$$
where $\mathfrak{n}(L)$ is defined in \eqref{defnL}.
\begin{lemma}
\label{LEMMA2}
Suppose
$$
m - \dim V_f^*  > \left(  \frac{1}{\varrho} + \frac{1}{2}  \right)   (d-1) 2^{d + 1}
$$
and
$$
m + s - \dim V_g^*  > \left(  \frac{1}{2 \varrho} +  \frac{3}{4}  \right)  (d-1) 2^{d + 2}.
$$
Suppose further that, for any $\varepsilon_0 > 0$ sufficiently small, we have
$$
W(L) \ll P^{t + \varepsilon_0} L^{- \varrho}
$$
for any $P^{1/100} \leq L \leq P^{d/2}$. Then there exists $\delta > 0$ such that
$$
\int_{ \mathfrak{a} \cap \mathfrak{n}(L) } S(\alpha) \d \alpha \ll P^{n - d }   (\log P)^{2n}  L^{-\delta}
$$
holds for any measurable $\mathfrak{a} \subseteq \RR/ \ZZ$.
\end{lemma}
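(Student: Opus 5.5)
Apply Proposition~\ref{LZ} with $\mathfrak{a}$ replaced by $\mathfrak{a}\cap\mathfrak{n}(L)$ and with $X$ a suitable power of $L$. Then bound each of the three terms by $P^{n-d}(\log P)^{2n}L^{-\delta}$.

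By \eqref{measnL}, $|\mathfrak{a}\cap\mathfrak{n}(L)| \ll L^2P^{-d}$. Choose $\kappa_1,\kappa_2$ equal to the lower bounds in the hypotheses (minus tiny slack), so that $\kappa_1 > (1/\varrho + 1/2)(d-1)2^{d+1} > (d-1)2^d$, and similarly for $\kappa_2$. With $X = L^{\beta}$ for some $\beta>0$ to be fixed:

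\emph{First term.}
\[
P^{n-d/2}X^{1-\kappa_1/((d-1)2^d)}\,L\,P^{-d/2} = P^{n-d}L^{1+\beta(1-\kappa_1/((d-1)2^d))}.
\]

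\emph{Second term.}
\[
P^{n-3d/4}X^{3/2-\kappa_2/((d-1)2^{d+1})}\,L^{1/2}P^{-d/4} = P^{n-d}L^{1/2+\beta(3/2-\kappa_2/((d-1)2^{d+1}))}.
\]

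\emph{Third term.} Use the hypothesis on $W(L)$:
\[
P^{m+s-d}X^2W(L) \ll P^{n-d+\varepsilon_0}L^{2\beta-\varrho}.
\]

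The natural choice is $\beta = \varrho/4$ (or any $\beta<\varrho/2$ leaving room). Then the third term is $P^{n-d+\varepsilon_0}L^{-\varrho/2}$. Since $L\geq P^{1/100}$, the factor $P^{\varepsilon_0}$ is at most $L^{100\varepsilon_0}$, so choosing $\varepsilon_0$ small absorbs it and gives a negative power of $L$.

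With $\beta = \varrho/4$, the first exponent is negative iff
\[
\kappa_1 > \bigl(4/\varrho + 1\bigr)(d-1)2^d = \bigl(2/\varrho + 1/2\bigr)(d-1)2^{d+1},
\]
which is stronger than the hypothesis, so $\beta$ must be larger. Take $\beta = \varrho/2 - \eta$ with $\eta>0$ small. Then the third term is $L^{-2\eta}P^{\varepsilon_0}$, which is fine. The first exponent, $1+\beta-\beta\kappa_1/((d-1)2^d)$, is negative iff $\kappa_1 > (1/\beta+1)(d-1)2^d$, i.e.\ roughly $\kappa_1 > (2/\varrho+1)(d-1)2^d = (1/\varrho+1/2)(d-1)2^{d+1}$. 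This matches the first hypothesis exactly, with strict inequality allowing $\eta$ small. The second exponent is negative iff
\[
\kappa_2 > (1/(2\beta) + 3/2)(d-1)2^{d+1} \approx (1/\varrho + 3/2)(d-1)2^{d+1} = (1/(2\varrho)+3/4)(d-1)2^{d+2},
\]
which matches the second hypothesis.

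It remains to check the constraint $X\leq \tfrac14 P^{d/2}$. We have $X = L^{\beta}$ with $\beta<1$ and $L\leq P^{d/2}$, so this holds for large $P$.

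The main obstacle is the bookkeeping. We must choose $\eta$ and then $\varepsilon_0$ (depending on $\eta$ and the strict-inequality slack) so that all three exponents are uniformly negative, giving some $\delta>0$ independent of $L$. The factor $P^{\varepsilon_0}$ is absorbed using $L\geq P^{1/100}$, and the $(\log P)^{m+s}$ factor is dominated by $(\log P)^{2n}$.
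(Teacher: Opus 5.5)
Your proposal is correct and is essentially the paper's proof. The paper also applies Proposition~\ref{LZ} to $\mathfrak{a}\cap\mathfrak{n}(L)$ with $X = L^{\varrho/2-\delta_0}$, uses $|\mathfrak{n}(L)|\ll L^2P^{-d}$ to turn the first two terms into exactly your two threshold conditions, and absorbs $P^{\varepsilon_0}$ into $L^{-2\delta_0}$ via $\varepsilon_0<\delta_0/100$ and $L\geq P^{1/100}$.

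One small wording fix: take $\kappa_1 = m-\dim V_f^*$ and $\kappa_2 = m+s-\dim V_g^*$ themselves. These lie strictly above the thresholds, whereas ``the lower bounds minus slack'' would fall below them.
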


\begin{proof}
Let $\delta_0 > 0$ be sufficiently small, and let $0 <  \varepsilon_0 <  \delta_0 / 100$.
We set
$$
X =  L^{\frac{\varrho}{2}  - \delta_0 } \leq \frac{1}{4}P^{\frac{d}{2}}.
$$
Let
$$
m - \dim V_f^*  \geq \kappa_1 \qquad
\textnormal{and}
\qquad
m + s - \dim V_g^*  \geq \kappa_2
$$
for some $\kappa_1, \kappa_2 > (d-1) 2^d$.
Let us recall from \eqref{measnL} that
$$
|\mathfrak{n}(L)| \ll L^{2} P^{ -d}.
$$
Then it follows from Proposition~\ref{LZ} that
\begin{eqnarray}
\int_{ \mathfrak{a} \cap \mathfrak{n}(L) } S(\alpha) \d \alpha  &\ll&  P^{n - \frac{d}{2}}   (\log P)^{2n}  X^{1 - \frac{   \kappa_1 }{ (d-1) 2^d  } } L P^{- \frac{d}{2}}
\label{mean}
\\
\notag
&&+ P^{ n - \frac{3d}{4} }   (\log P)^{2n}  X^{\frac{3}{2} - \frac{   \kappa_2 }{ (d-1) 2^{d+1}  } } L^{\frac{1}{2}} P^{- \frac{d}{4}}
\\
\notag
&&+ P^{ m + s - d }   (\log P)^{m + s}  X^{2}  W(L).
\end{eqnarray}
Therefore, if
$$
\frac{ \varrho  \kappa_1 }{ (d-1) 2^{d+1}  }  >  1 +  \frac{\varrho}{2}
\qquad
\textnormal{and}
\qquad
\frac{ \varrho  \kappa_2 }{ (d-1) 2^{d+2} }  >  \frac{1}{2} + \frac{3 \varrho}{4},
$$
or equivalently,
$$
\kappa_1 >   \left(  \frac{1}{\varrho} + \frac{1}{2}  \right)   (d-1) 2^{d + 1}
\qquad
\textnormal{and}
\qquad
\kappa_2  >   \left(  \frac{1}{2 \varrho} +  \frac{3}{4}  \right)  (d-1) 2^{d + 2},
$$
then \eqref{mean} becomes
\begin{eqnarray*}
\int_{ \mathfrak{a} \cap   \mathfrak{n}(L) } S(\alpha) \d \alpha
&\ll&
P^{n - d }    (\log P)^{2n}  L^{-\varepsilon} + P^{ n - t - d }   (\log P)^{n - t}  X^{2}  W(L)
\\
&\ll&
P^{n - d }    (\log P)^{2n}  L^{-\varepsilon} + P^{ n - t - d }   (\log P)^{n - t}  L^{\varrho - 2 \delta_0}  W(L)
\\
&\ll&
P^{n - d }    (\log P)^{2n}  L^{-\varepsilon} + P^{ n - d }   (\log P)^{n - t}  L^{- 2 \delta_0}  P^{\varepsilon_0}
\\
&\ll&
P^{n - d }    (\log P)^{2n}  L^{-\varepsilon},
\end{eqnarray*}
for some $\varepsilon > 0$.
\end{proof}

We make use of the following simple structure regarding $F$.
\begin{lemma}
\label{smooth}
Let $F \in \ZZ[x_1, \dots, x_n]$ be a homogeneous form of degree $d \geq 2$ such that $V_F^* = \{ \mathbf{0} \}$.
Then either there exists $1 \leq i \leq n$ such that $x_i^d$ has a nonzero coefficient in $F$,
or there exist $1 \leq j, k \leq n$ such that $x_j^{d-1} x_k$ has a nonzero coefficient.
\end{lemma}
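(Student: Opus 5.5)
We argue by contradiction: suppose that no monomial of the form $x_i^d$ or $x_j^{d-1}x_k$ (with $j \neq k$, the case $j = k$ being the first alternative) appears in $F$ with nonzero coefficient. We will show that $V_F^*$ then contains a nonzero point. The natural candidate is a standard basis vector $\mathbf{e}_j$.

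Fix $j$ and expand $F$ in powers of $x_j$:
$$
F(\x) = \sum_{r=0}^{d} x_j^{r} G_{d-r}(\x'),
$$
where $\x'$ denotes the remaining variables and $G_{d-r}$ is homogeneous of degree $d-r$ in $\x'$. By assumption $G_0 = 0$, since $x_j^d$ has zero coefficient. Also $G_1 = 0$, since $G_1$ is the linear form whose coefficients are those of $x_j^{d-1}x_k$ for $k \neq j$. Thus every monomial of $F$ has degree at most $d-2$ in $x_j$.

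Now compute $\nabla F$ at $\mathbf{e}_j$. Each partial derivative $\partial F/\partial x_\ell$ is a sum of monomials of total degree $d-1$. Evaluating at $\mathbf{e}_j$ kills every monomial except a multiple of $x_j^{d-1}$. Such a term in $\partial F/\partial x_\ell$ can only come from $x_j^{d-1}x_\ell$ when $\ell \neq j$, or from $x_j^d$ when $\ell = j$. Both coefficients vanish, so $\nabla F(\mathbf{e}_j) = \mathbf{0}$. Hence $\mathbf{e}_j \in V_F^*$, which contradicts $V_F^* = \{\mathbf{0}\}$.

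The argument is elementary and has no real obstacle. The only care needed is that the case $j = k$ in the statement coincides with $x_i^d$, so the second alternative may be read with $j \neq k$. The hypothesis $d \geq 2$ ensures that $x_j^{d-1}x_k$ with $j \neq k$ is genuinely a distinct monomial.
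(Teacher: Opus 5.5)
Your proof is correct and is essentially the paper's argument. The paper evaluates $\nabla F$ at $(1,0,\ldots,0)$, where $\partial F/\partial x_1$ equals $d$ times the coefficient of $x_1^d$ and $\partial F/\partial x_k$ equals the coefficient of $x_1^{d-1}x_k$, and this is exactly your computation at $\mathbf{e}_j$; the expansion of $F$ in powers of $x_j$ is correct but not needed for it.
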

\begin{proof}
For each $2 \leq k \leq n$, let $C_k$ denote the coefficient of $x_{1}^{d-1} x_k$ in $F$.
Then
$$
\frac{ \partial F}{ \partial x_k }(1, 0, \ldots, 0) = \begin{cases}
                                                       d F(1, 0, \ldots, 0)  & \mbox{if } k = 1,  \\
                                                       C_k  & \mbox{if } 2 \leq k \leq n.
                                                      \end{cases}
$$
Therefore, if the coefficients of $x_i^d$  and $x_j^{d-1} x_k$  in $F$ are zero for all $1 \leq i, j, k \leq n$,
then this leads to
$$
(1, 0, \ldots, 0) \in V_F^*,
$$
which is a contradiction.
\end{proof}

\begin{proof}[Proof of Theorem~\ref{t:minor}]
We begin by partitioning the variables in a way that Lemma~\ref{LEMMA2} is applicable.
If we are in the first case of Lemma~\ref{smooth}, i.e.  there exists $1 \leq i \leq n$ such that $x_i^d$ has a nonzero coefficient in $F$,
then we assume without loss of generality that $i = n$, and set $\w = x_n$. In this case, it follows from
Lemma~\ref{lem:pure-terminal} that
$$
W(L) \ll P^{t + \varepsilon_0} L^{- \varrho},
$$
for any $P^{1/100} \leq L \leq P^{d/2}$. On the other hand, if we are not in the first case of Lemma~\ref{smooth},
i.e. there exist $1 \leq j, k \leq n$ such that $x_j^{d-1} x_k$ has a nonzero coefficient in $F$
while the coefficient of $x_i^d$ is zero for all $1 \leq i \leq n$, then we assume without loss of generality
that $(j, k) = (n-1, n)$, and set $\w = (x_{n-1}, x_n)$. In this case, it follows from
Lemma~\ref{lem:mixed-terminal} that
$$
W(L) \ll P^{t + \varepsilon_0} L^{- \varrho},
$$
for any $P^{1/100} \leq L \leq P^{d/2}$. Thus, we see that in either case the upper
bound on $W(L)$ required in Lemma~\ref{LEMMA2} is satisfied.

Since $V_F^*  = \{ \mathbf{0} \}$, it follows from~\cite[Lemma~3.1]{yamagishi} that
$$
\textnormal{codim }V_f^* \geq \textnormal{codim }V_F^* - 2(s + t) = n - 2 (s + t).
$$
Let
$$
V_1 = \left\{ \x \in \AA^n_\CC:  \frac{\partial F }{\partial x_{1}} (\x) = \cdots = \frac{\partial F }{\partial x_{m}} (\x)  = 0  \right\}
$$
and
$$
V_2 = \left\{ \x \in \AA^n_\CC:  \frac{\partial F }{\partial x_{m+1}} (\x)  = \cdots = \frac{\partial F }{\partial x_{n}} (\x) = 0  \right\}.
$$
Since these are affine varieties defined by homogeneous forms, it follows that
$$
0 = \dim V_F^* = \dim (V_1 \cap V_2) \geq \dim V_2 - m,
$$
i.e.
$$
\dim V_2 \leq m.
$$
On noting that
$$
\frac{\partial (F - f) }{ \partial x_i } (\x) = \frac{\partial F  }{ \partial x_i } (\x) \qquad (m + 1 \leq i \leq n),
$$
we have
$$
V_{F - f}^* \subseteq V_2.
$$
Hence,
$$
\textnormal{codim } V_{F - f}^* = n - \dim V_{F - f}^* \geq n - m.
$$
Therefore, it again follows from~\cite[Lemma~3.1]{yamagishi}  that
$$
\textnormal{codim } V_{g}^*  \geq \textnormal{codim } V_{F - f}^* - 2 t \geq n - m - 2t.
$$
Thus, the dimension requirements of Lemma~\ref{LEMMA2} are satisfied if
$$
n - 2(s + t)  \geq \left(  \frac{1}{\varrho} + \frac{1}{2}  \right)   (d-1) 2^{d + 1} + 1
$$
and
$$
n - m - 2t  =   (s + t) - 2t  \geq \left(  \frac{1}{2 \varrho} +  \frac{3}{4}  \right)  (d-1) 2^{d + 2} + 1.
$$
This means that provided
\begin{eqnarray}
\label{eq:nnn}
n - 4 t  &=& n - 2(s + t) + 2 (s + t) - 4t
\\
\notag
&\geq&
\left(  \frac{1}{\varrho} + \frac{1}{2}  \right)   (d-1) 2^{d + 1}
+ \left(  \frac{1}{\varrho} +  \frac{3}{2}  \right)  (d-1) 2^{d + 2} + 3
\\
\notag
&=&   \left(  \frac{6}{\varrho} +  7\right)     (d-1) 2^{d} + 3,
\end{eqnarray}
we may achieve\footnote{We can choose
$$
s  =    \left(  \frac{1}{2 \varrho} +  \frac{3}{4}  \right)  (d-1) 2^{d + 2} + t + 1 \qquad \textnormal{and} \qquad m = n - s - t.
$$}
the desired partition of the variables. Finally, since $t \in \{1, 2\}$, it is easy to see that \eqref{eq:nnn} holds given
$$
n \geq ( 24 (d-1) d^2 + 7) (d-1) 2^{d} + 11 = (24 (d-1)^2 d^2 + 7 (d-1) ) 2^{d} + 11.
$$

Let us recall from \eqref{dydshell} that
$$
(\RR / \ZZ) \setminus \mathfrak{M} (Q_0)
=  \bigcup_{1 \leq j \leq J} \mathfrak{n}(2^{j-1} Q_0).
$$
Therefore, given any measurable $\mathfrak{a} \subseteq \RR/\ZZ$, by applying Lemma~\ref{LEMMA2}, we obtain
\begin{eqnarray*}
\int_{ \mathfrak{a} \cap ((\RR / \ZZ) \setminus \mathfrak{M} (Q_0) ) } S(\alpha) \d \alpha &=&  \sum_{j = 1}^{J} \int_{ \mathfrak{a} \cap  \mathfrak{n} ( 2^{j-1} Q_0)}   S(\alpha) \d \alpha
\\
&\ll&  P^{n - d }    (\log P)^{2n}  \sum_{j = 1}^J   (2^{j-1} Q_0)^{-\varepsilon}
\\
&\ll&  P^{n - d - \delta},
\end{eqnarray*}
where $\delta > 0$.
\end{proof}

\section{The major arcs}
Let $F \in \mathbb{Z}[x_1, \ldots, x_n]$ be a homogeneous form of degree $d \geq 2$. 
Let $\mathbf{x}_0$, $\delta$, $\omega$ and $\psi_1, \ldots, \psi_n$ be as in Section~\ref{s:prelim}.
Suppose further that the equation $F(\x) = 0$ has a nonsingular solution in $( \mathbb{Z}_p^{\times})^n$ for every prime $p$.

We define the following extended major arcs
\begin{equation}
\label{defn major}
\mathfrak{M}^+(\vartheta_0; \lambda)  = \bigcup_{1 \leq q \leq P^{\vartheta_0}} \bigcup_{\substack{ 0 \leq a <  q  \\ \gcd(a, q) = 1}} \mathfrak{M}^+_{q,a}(\vartheta_0),
\end{equation}
where
$$
\mathfrak{M}^+_{q, a} (\vartheta_0) = \left\{ {\alpha} \in \RR/\ZZ :  \left\| \alpha - \frac{a}{q}  \right\| \leq  P^{\vartheta_0 + \lambda  - d} \right\}.
$$
It can be verified that the arcs $\mathfrak{M}^+_{q, a}(\vartheta_0)$ are disjoint for $P$ sufficiently large (see for example~\cite[Lemma~4.1]{birch} for the argument).

We now improve the complete character sum estimate in~\cite[Lemma~2.4]{yamagishi2} using the work of Nguyen~\cite{nguyen1, nguyen}.
\begin{lemma}\label{lem:major-character}
Let $F \in \mathbb{Z}[x_1, \ldots, x_n]$ be a homogeneous form of degree $d \geq 2$.
Let $a \in \mathbb{Z}$ and $q \in \mathbb{N}$  be such that $\gcd(a,q)=1$.
Let $\chi_1, \ldots, \chi_n$ be any Dirichlet characters modulo $q$, and let
\begin{eqnarray}
\label{calS}
\mathcal{S} (\chi_1, \ldots, \chi_n) = \sum_{\mathbf{h} \in (\mathbb{Z}/q \mathbb{Z})^n  } \chi_1(h_1) \cdots \chi_n(h_n)
\ e \left( \frac{a}{q} F(h_1, \ldots, h_n)  \right).
\end{eqnarray}
Then, for any $\varepsilon > 0$, we have
$$
|\mathcal{S} (\chi_1, \ldots, \chi_n) | \ll q^{n - \frac{ \textnormal{codim } V_{F}^* }{16(2d-1)}  + \varepsilon},
$$
where the implicit constant is independent of the choice of $\chi_1, \ldots, \chi_n$.
\end{lemma}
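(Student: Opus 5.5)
We plan to follow the standard route for such complete character sums: reduce to prime power moduli by multiplicativity, then split into the case where every character is trivial (pure Gauss-type sums) and the general case, which we handle by expanding characters via Gauss sums or by Nguyen's bounds for exponential sums with multiplicative twists.

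\emph{Multiplicativity.} Write $q=\prod p^{e}$. By the Chinese remainder theorem, each $\chi_i$ factors as a product of characters modulo the prime powers $p^e$, and $a/q$ splits as $\sum_p a_p/p^e$. Hence $\mathcal{S}$ factors as a product over $p^e\| q$ of sums of the same shape. The twisting by $\prod_{p'\neq p} p'^{e'}$ appears only through $a_p$ and through the unit values of the characters, so the moduli of the factors are unchanged. Thus it suffices to show
$$
|\mathcal{S}_{p^e}|\leq C^{n}\, p^{e(n-\kappa)}\quad\text{with}\quad\kappa=\frac{\textnormal{codim } V_F^*}{16(2d-1)}.
$$
Here the constant $C$ is uniform in $p$ and $e$. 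The product of the factors $C^{\omega(q)}$ is then $\ll q^\varepsilon$.

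\emph{Prime power bound.} For a fixed $p^e$, decompose $\mathbf h$ by the set of coordinates divisible by $p$ and by residue classes $\mathbf h=\mathbf u+p^{\lceil e/2\rceil}\mathbf v$ with $\mathbf u$ units, in the spirit of the stationary phase argument used for $e\geq2$.
\begin{itemize}
\item For $e\geq2$, characters modulo $p^e$ restricted to $1+p^{\lceil e/2\rceil}\mathbb Z$ are additive characters $e(c_i v_i/p^{\lfloor e/2\rfloor})$. The sum over $\mathbf v$ therefore localises $\mathbf u$ to solutions of $a\nabla F(\mathbf u)\equiv -\mathbf c\,\mathbf u^{-1}$ modulo $p^{\lfloor e/2\rfloor}$. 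This is a twisted critical point system. Its number of solutions is controlled through the dimension of the variety $\{\mathbf x: x_i\partial_iF(\mathbf x)=\lambda c_i\}$, whose dimension is bounded in terms of $\dim V_F^*$. This is where we intend to import Nguyen's results~\cite{nguyen1,nguyen}: they give exactly bounds of the shape $p^{e(n-\sigma)}$ for twisted sums, with $\sigma$ proportional to the codimension of the singular locus.
\item For $e=1$, we use Deligne/Katz-type bounds for mixed character sums, again via Nguyen, giving $p^{n-\textnormal{codim}/2}$ up to constants.
\end{itemize}
The coordinates divisible by $p$ contribute zero unless the corresponding characters are trivial. In that case we absorb them by restricting $F$ to coordinate subspaces, which loses at most a factor $2$ in codimension per variable fixed. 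This loss is controlled by partitioning, as in~\cite[Lemma~3.1]{yamagishi}, and the weak exponent $1/(16(2d-1))$ leaves enough room for it.

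\emph{Main obstacle.} The main difficulty is uniformity in the characters: the resulting constant must not depend on the conductors of the $\chi_i$, and the bound must hold when some characters are imprimitive or trivial. We would first try to deduce the statement directly from Nguyen's uniform estimate for $\sum \chi(\mathbf h)e(aF(\mathbf h)/p^e)$. The loss from the conversion between $V_F^*$ and the log-canonical-threshold-type invariant used there accounts for the constant $16(2d-1)$, which is already present in~\cite[Lemma~2.4]{yamagishi2}. If that direct deduction is not available, we would fall back to the $\mathbf u,\mathbf v$ splitting above together with a point-counting bound for the twisted critical locus.
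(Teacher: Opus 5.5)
Your proposal has a genuine gap: it gives no mechanism for handling the multiplicative characters, and both of your routes rest on inputs that are assumed rather than available.

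\begin{itemize}
\item \emph{Direct route.} It presupposes a bound from Nguyen for twisted sums $\sum\chi(\mathbf h)e(aF(\mathbf h)/p^e)$, uniform in the characters. What Nguyen's Theorem~C supplies, and all the paper uses from it, is a bound for \emph{pure} complete sums: $\sum_{\mathbf x\bmod q}e(aH(\mathbf x)/q)\ll q^{N-\mathrm{codim}\,V_H^*/(2(\deg H-1))+\varepsilon}$.
\item \emph{Fallback, $e\geq2$.} The whole difficulty is counting solutions modulo $p^{\lfloor e/2\rfloor}$ of the twisted critical-point system $u_i\,\partial_iF(\mathbf u)\equiv-\bar a c_i$, uniformly in $\mathbf c$. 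You only assert this count. Counting congruence solutions modulo high prime powers via complex dimension fails in general at singular points; for instance $x^2\equiv0\bmod p^k$ has $p^{\lfloor k/2\rfloor}$ solutions.
\item \emph{Fallback, $e=1$.} Square-root cancellation for mixed sums with an arbitrary singular $F$ needs hypotheses you do not verify.
\item \emph{Coordinates divisible by $p$.} Dirichlet characters modulo $q$, principal ones included, vanish at non-units, so those terms are simply zero. The real issue is that even the all-principal case is a sum over $((\mathbb Z/q\mathbb Z)^\times)^n$. That is not a pure sum of a restriction of $F$ to a coordinate subspace.
\item \emph{The constant.} $16(2d-1)$ does not come from converting a log-canonical-threshold invariant.
\end{itemize}

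The missing idea is to \emph{remove} the characters before invoking any exponential-sum bound. Since $\chi_i(h_ik_i)=\chi_i(h_i)\chi_i(k_i)$ for units, you substitute $h_i\mapsto h_ik_i$ and average over unit vectors $\mathbf k$. Applying Cauchy--Schwarz twice and using positivity to extend all summations to full residue systems then gives \cite[(7.10)]{yamagishi}:
\[
|\mathcal S(\chi_1,\ldots,\chi_n)|^4\le\sum_{\mathbf h,\mathbf h',\mathbf k,\mathbf k'\in(\mathbb Z/q\mathbb Z)^n}e\Bigl(\frac aq\,\mathfrak D(\mathbf h,\mathbf h';\mathbf k,\mathbf k')\Bigr),
\]
where $\mathfrak D=G(\mathbf h;\mathbf k)-G(\mathbf h';\mathbf k)-G(\mathbf h;\mathbf k')+G(\mathbf h';\mathbf k')$ and $G(\mathbf h;\mathbf k)=F(h_1k_1,\ldots,h_nk_n)$. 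This is a character-free complete sum of a form of degree $2d$ in $4n$ variables, so uniformity in $\chi_1,\ldots,\chi_n$ is automatic.

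Next, $V^*_{\mathfrak D}\subseteq V^*_{\mathfrak D,1}$ and $\mathrm{codim}\,V^*_{\mathfrak D,1}\ge\frac12\,\mathrm{codim}\,V_F^*$, as shown in \cite[Lemma~7.4]{yamagishi}. Nguyen's pure bound, already valid for arbitrary $q$, therefore gives $|\mathcal S|^4\ll q^{4n-\mathrm{codim}\,V_F^*/(4(2d-1))+\varepsilon}$. No prime-power or stationary-phase analysis is needed.

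The constant $16(2d-1)=4\cdot2\cdot2(2d-1)$ comes exactly from three sources: the fourth root, the halving of codimension, and Nguyen's exponent for a form of degree $2d$.
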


\begin{proof}
We have the following bound recorded in the proof of~\cite[Theorem~D]{nguyen1},
which follows from~\cite[Theorem~C]{nguyen1} (or~\cite[Theorem~C]{nguyen} with the trivial weight $w = (1, \ldots, 1)$ and $r=1$) and the Chinese remainder theorem.
Let $H \in \ZZ[x_1, \ldots, x_N]$ be a homogeneous form of degree $d \geq 2$.
Let $a \in \ZZ$ and  $q \in \NN$ such that $\gcd(a,q) = 1$.
Then, for any $\ve > 0$ sufficiently small, we have
\begin{equation}
\label{NGY}
\sum_{\x \in (\ZZ/ q\ZZ)^N} e \left(  \frac{a H(\x)}{q} \right) \ll q^{N - \frac{\textnormal{codim } V_H^*}{2(d-1)} + \ve }.
\end{equation}

It is shown in the proof of~\cite[Lemma~7.4]{yamagishi}, in~\cite[(7.10)]{yamagishi} to be precise, that
\begin{equation}
\label{CCSS}
|\mathcal{S} (\chi_1, \ldots, \chi_n)|^4 \leq \sum_{\h, \h' \in (\ZZ/ q \ZZ)^n}
\sum_{\kk, \kk' \in (\ZZ/ q \ZZ)^n} e \left( \frac{a}{q} \mathfrak{D}( \h, \h'; \kk, \kk'  )  \right),
\end{equation}
where
$$
\mathfrak{D}( \h, \h'; \kk, \kk'  ) = G(\h; \kk) - G(\h'; \kk) - G(\h; \kk') + G(\h'; \kk')
$$
and
$$
G(\h; \kk) = F(h_1k_1, \ldots h_n k_n).
$$
It is also shown in the same proof that
$$
\textnormal{codim } V_{\mathfrak{D}, 1}^* \geq \frac{\textnormal{codim } V_F^*}{2},
$$
where
\begin{eqnarray*}
&& V_{\mathfrak{D}, 1}^*
\\
&=& \left\{  (\h, \h', \kk, \kk') \in \AA^{4n}_{\CC}: \frac{\partial \mathfrak{D}}{\partial h_i}  ( \h, \h'; \kk, \kk'  )  = \frac{\partial \mathfrak{D}}{\partial h'_i}  ( \h, \h'; \kk, \kk'  )   =   0  \, \,  (1 \leq i \leq n) \right\}.
\end{eqnarray*}
Since
$$
V_{\mathfrak{D}}^* \subseteq V_{\mathfrak{D}, 1}^*,
$$
it follows that
$$
\textnormal{codim } V_{\mathfrak{D}}^* \geq \frac{\textnormal{codim } V_F^*}{2}.
$$
Therefore, by applying \eqref{NGY} to \eqref{CCSS}, we obtain
$$
|\mathcal{S} (\chi_1, \ldots, \chi_n)|^4 \ll q^{4n - \frac{\textnormal{codim } V_F^*}{4(2d-1)} + \ve }.
$$
The lemma then follows by taking the fourth root.
\end{proof}

Let $\vartheta_0, \gamma, \lambda > 0$ be such that
\begin{eqnarray}
\label{theta0 major}
2 \vartheta_0 + \gamma < \frac{5}{12}
\qquad
\textnormal{ and }
\qquad
\gamma > 2 \vartheta_0 + 2 \lambda.
\end{eqnarray}
By replacing~\cite[Lemma~2.4]{yamagishi2} with Lemma~\ref{lem:major-character}, we obtain
the following estimate over the major arcs from~\cite[Proposition~3.1]{yamagishi2}; this proposition refines the argument in~\cite[Section~8]{yamagishi}.
\begin{proposition}\cite[Proposition~3.1]{yamagishi2}
\label{prop major}
Let $F$, $\mathbf{x}_0$, $\delta$, $\omega$ and $\psi_1, \ldots, \psi_n$ be as in this section.
Suppose
$$
\textnormal{codim } V_F^* >  \max \left( 48(2d-1), (d-1) 2^{d}  + 4  (2 d - 1),   \frac{ (d-1) d 2^{d+2} }{  2d-1   }  \right).
$$
Let $\vartheta_0, \gamma, \lambda > 0$ be such that \eqref{theta0 major} is satisfied.
Then, for any $A > 0$, we have
$$
\int_{\mathfrak{M}^+(\vartheta_0; \lambda )} S(\alpha)   \d \alpha = c(F; \omega, \mathbf{x}_0)  P^{n-d} + O \left(  \frac{P^{n-d}}{(\log P)^A} \right),
$$
where $c(F; \omega, \mathbf{x}_0) > 0$ is a constant depending only on $F$, $\omega$ and $\x_0$.
\end{proposition}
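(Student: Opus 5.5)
The plan is to follow the argument of \cite[Proposition~3.1]{yamagishi2}, which refines \cite[Section~8]{yamagishi}. The only change is to feed in Lemma~\ref{lem:major-character} in place of \cite[Lemma~2.4]{yamagishi2}; this relaxes the numerical hypothesis on $\textnormal{codim } V_F^*$ to the one stated.

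\emph{Step 1: character decomposition.} For $\alpha = a/q + \beta \in \mathfrak{M}^+_{q,a}(\vartheta_0)$, split each $x_i$ according to its residue class $h_i$ modulo $q$. The terms with $\gcd(x_i,q)>1$ are prime powers dividing $q$; they contribute negligibly and we discard them. On the remaining classes we detect $x_i \equiv h_i \pmod q$ with Dirichlet characters. This gives
\[
S(a/q+\beta) = \frac{1}{\varphi(q)^n}\sum_{\chi_1,\ldots,\chi_n \bmod q} \mathcal{S}(\bar\chi_1,\ldots,\bar\chi_n) \sum_{\x} \prod_{i=1}^n \chi_i(x_i)\psi_i(x_i)\Lambda(x_i)\, e(\beta F(\x)) + (\text{small}),
\]
with $\mathcal{S}$ as in \eqref{calS}.

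\emph{Step 2: the main term.} When every $\chi_i$ is principal, we replace $\Lambda$ by $1$ using the prime number theorem (Siegel--Walfisz for the relevant range). The resulting sum over $\x$ is then compared with $\int \prod\psi_i(x_i)\, e(\beta F(\x))\,\d\x$, which equals $P^n I(\beta P^d)$. Summing over $q \le P^{\vartheta_0}$ and $a$, and integrating over $|\beta|\le P^{\vartheta_0+\lambda-d}$, produces the truncated singular series times the truncated singular integral. We then complete both.
\begin{itemize}
\item For the singular integral we use the standard bound $I(\tau)\ll \min(1,|\tau|^{-\textnormal{codim } V_F^*/((d-1)2^d)+\ve})$, which needs $\textnormal{codim } V_F^* > (d-1)2^d$ (with the extra room for the tail). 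It is positive because of the nonsingular real point $\x_0$ and the choice of small $\delta$.
\item For the singular series, Lemma~\ref{lem:major-character} gives $q^{-n}|\mathcal{S}(\chi_0,\ldots,\chi_0)| \ll q^{-\textnormal{codim }V_F^*/(16(2d-1))+\ve}$. The hypothesis $\textnormal{codim } V_F^* > 48(2d-1)$ makes this summable, with room to spare, so the tail beyond $P^{\vartheta_0}$ saves a power of $P$. Positivity of the series follows from multiplicativity and Hensel lifting of the nonsingular solutions in $(\ZZ_p^\times)^n$.
\end{itemize}
Together these give $c(F;\omega,\x_0)>0$.

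\emph{Step 3: non-principal characters (main obstacle).} We must bound the contribution of tuples with at least one non-principal $\chi_i$, uniformly for $q$ up to $P^{\vartheta_0}$, where Siegel--Walfisz alone is useless. I would proceed as in \cite{yamagishi2}.
\begin{itemize}
\item First, remove the coupling through $e(\beta F(\x))$ by partial summation in each variable over short blocks. This is possible because $|\beta|\le P^{\vartheta_0+\lambda-d}$. It reduces matters to bounding $\psi_\chi(y)=\sum_{x\le y}\chi(x)\Lambda(x)$ on short intervals.
\item Next, use the explicit formula and zero-density estimates of Huxley type, together with the large-sieve style bound summed over all primitive characters of conductor up to $P^{\vartheta_0}$. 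This gives $\sum_{q}\sum_{\chi}^{*}\max_y |\psi_\chi(y)| \ll P(\log P)^{-A}$-type savings, with the exponent constraint $2\vartheta_0+\gamma<5/12$ coming from Huxley's density exponent $12/5$.
\item Characters are then reduced to primitive ones, and the character-sum decay of Lemma~\ref{lem:major-character} (uniform in $\chi$) is used to absorb the loss from summing over $q$ and the $\varphi(q)^{n}$ tuples. This is where the third condition $\textnormal{codim } V_F^* > (d-1)d2^{d+2}/(2d-1)$ and the relation $\gamma>2\vartheta_0+2\lambda$ enter: they balance the measure of the arcs against the saving from $\mathcal{S}$ and $I$.
\item Possible exceptional zeros are handled by the usual Siegel argument, which costs only $(\log P)^{-A}$.
\end{itemize}
Collecting Steps 2 and 3 yields the stated asymptotic with error $O(P^{n-d}(\log P)^{-A})$.
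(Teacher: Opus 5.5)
Your plan matches the paper's, which simply reruns \cite[Proposition~3.1]{yamagishi2} with Lemma~\ref{lem:major-character} in place of \cite[Lemma~2.4]{yamagishi2} and gives no further detail. One correction to your reconstruction: in Step~2 the prime number theorem cannot handle the principal characters on these enlarged arcs, because an error of size $P^{n}\exp(-c\sqrt{\log P})$ that is uniform in $\beta$ integrates to $P^{n-d+\vartheta_0+\lambda}\exp(-c\sqrt{\log P})$ over $|\beta|\leq P^{\vartheta_0+\lambda-d}$; the zeros of $\zeta$ (conductor $1$) should therefore go through the same explicit-formula and zero-density treatment as in Step~3, with only the leading term of the trivial character producing the main term.
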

The constant $c(F; \omega, \mathbf{x}_0)$ is the usual product of local densities (see~\cite{yamagishi, yamagishi2} for more details).
We are now in position to prove our main theorem.
\begin{proof}[Proof of Theorem~\ref{t:asymptotic}]
Let us set
$$
\vartheta_0 = \frac{1}{16} - \varepsilon,
\qquad
\lambda = \frac{1}{12}
\qquad
\textnormal{and}
\qquad
\gamma = \frac{7}{24}
$$
so that \eqref{theta0 major} is satisfied.
Since
$$
\frac{1}{100} \leq \vartheta_0,
$$
it is clear that
$$
\mathfrak{M} (Q_0) \subseteq  \mathfrak{M}^+ (\vartheta_0; \lambda),
$$
which implies
$$
\mathfrak{a} =  (\RR / \ZZ) \setminus  \mathfrak{M}^+ (\vartheta_0; \lambda) \subseteq   (\RR / \ZZ) \setminus \mathfrak{M} (Q_0).
$$
Therefore, by applying Theorem~\ref{t:minor} and Proposition~\ref{prop major}, we obtain
\begin{eqnarray*}
\sum_{\x \in \ZZ^n}  \prod_{i = 1}^n  \psi_i (x_i)  \Lambda(x_i)  \cdot \mathbbm{1}_{V(F)}(\x)
&=& \int_{\RR/\ZZ}  S(\alpha)   \d \alpha
\\
&=&
\int_{\mathfrak{M}^+(\vartheta_0; \lambda )} S(\alpha)   \d \alpha
+   \int_{ \mathfrak{a} } S(\alpha)   \d \alpha
\\
&=& c(F; \omega, \mathbf{x}_0) P^{n-d} + O \left(  \frac{P^{n-d}}{(\log P)^A} \right).
\end{eqnarray*}
\end{proof}

\appendix
\section{Proof of Lemma~\ref{TypeII}}
\label{A}

Without loss of generality let us suppose $N \leq M$. We begin by applying the Cauchy--Schwarz inequality to obtain the following
\begin{eqnarray}
\label{S^2}
&&|\mathcal{S}|^2
\\
\notag
&\ll& M^{1 + 2 \varepsilon}
\sum_{ M \leq  x < 2 M }  \sum_{ \substack{ N \leq   y_1, y_2  < 2 N \\  \gamma P/x \leq   y_1, y_2  \leq  X/x } } \xi_2(y_1) \overline{\xi_2(y_2)} e(f(x y_1) - f(x y_2))
\\
\notag
&\ll&
M^{2 + 4\ve} N
+
M^{1 + 4 \ve} \sum_{ \substack{ N \leq y_1, y_2 <  2 N \\  y_1, y_2 \leq P/M \\ y_1 - y_2 > 0}}  \left| \sum_{ x \in I(y_1, y_2)  }  e(f(x y_1) - f(x y_2)) \right|,
\end{eqnarray}
where $I(y_1, y_2) = [M, 2M) \cap [\gamma P/ y_1, X/y_1] \cap [\gamma P/ y_2, X / y_2]$.

Let $y_1 > y_2 \geq N$. By Dirichlet's theorem, there exist $a\in\ZZ$ and $q \in \NN$ such that  $\gcd(a,q)=1$,
\[
1\leq q\leq M^{d - 1}
\qquad
\textnormal{and}
\qquad
|q (y_1^d - y_2^d ) \theta-a|\leq M^{- d + 1}.
\]
Let
\[
\lambda_{y_1, y_2} = q + M^{d}|q (y_1^d - y_2^d) \theta - a|.
\]
Since $\lambda_{y_1, y_2}  \leq 2 M^{d-1}$, it follows that
\begin{eqnarray}
\label{bounddd}
\sum_{ \substack{  N \leq  y_1, y_2 <  2 N\\  y_1, y_2 \leq P/M \\ y_1 - y_2 > 0}} \lambda_{y_1, y_2}^{ \frac{1}{d(d-1)}}
\ll P^2  M^{- 2 +  \frac{1}{d}}.
\end{eqnarray}
Therefore, by Lemma~\ref{lem:Weyl} and \eqref{bounddd}, we obtain
\begin{eqnarray*}
&&
\sum_{ \substack{  N \leq  y_1, y_2 <  2 N\\  y_1, y_2 \leq P/M \\ y_1 - y_2 > 0}}  \left| \sum_{x \in I(y_1, y_2) }  e(f(x y_1) - f(x y_2)) \right|
\\
&\ll&
M^{1+ \varepsilon}  \sum_{ \substack{  N \leq  y_1, y_2 <  2 N\\  y_1, y_2 \leq P/M \\ y_1 - y_2 > 0}}  \bigl(   M^{-1} + \lambda_{y_1, y_2}^{-1}+\lambda_{y_1, y_2} M^{-d}  \bigr)^{ \frac{1}{d(d-1)}}
\\
%
&\ll&
P^{2} M^{- 1 -  \frac{1}{d(d-1)} + \varepsilon} +  M^{1 + \ve} E,
\end{eqnarray*}
where
$$
E = \sum_{ \substack{  N \leq  y_1, y_2 <  2 N\\  y_1, y_2 \leq P/M \\ y_1 - y_2 > 0}}
\lambda_{y_1, y_2}^{-  \frac{1}{d(d-1)} } = \sum_{ (y_1, y_2) \in \mathcal{Y} }
\lambda_{y_1, y_2}^{-  \frac{1}{d(d-1)} }.
$$
By substituting this estimate into \eqref{S^2}, we see that it suffices to prove
\begin{equation}
\label{Ebdd}
E \ll  P^{2 +  \ve} M^{ - 2 }  Q^{ - \frac{1}{d(d-1)} }.
\end{equation}

Let $\eta_1 > 0$ be sufficiently small. We have
$$
\sum_{ \substack{ (y_1, y_2) \in \mathcal{Y} \\ q > \eta_1 Q } }  \lambda_{y_1, y_2}^{-  \frac{1}{d(d-1)} }  \ll
\sum_{ \substack{ (y_1, y_2) \in \mathcal{Y} \\ q > \eta_1 Q }} Q^{-  \frac{1}{d(d-1)} } \ll P^2 M^{- 2} Q^{-  \frac{1}{d(d-1)} },
$$
which satisfies \eqref{Ebdd}. Similarly, we also have
$$
\sum_{ \substack{ (y_1, y_2) \in \mathcal{Y} \\ M^{d}|q (y_1^d - y_2^d) \theta - a| > \eta_1 Q}} \lambda_{y_1, y_2}^{-  \frac{1}{d(d-1)} }   \ll P^2 M^{- 2} Q^{-  \frac{1}{d(d-1)} }.
$$
Therefore, from this point on, we may suppose without loss of generality that
$$
q \leq \eta_1 Q \qquad \textnormal{and} \qquad M^{d}|q (y_1^d - y_2^d) \theta - a| \leq  \eta_1 Q  \qquad ( (y_1, y_2) \in \mathcal{Y}).
$$

Let $1 \leq g < 2 N$ and $1 \leq h < N/g$. By Dirichlet's theorem, there exist $b\in\ZZ$ and $r \in \NN$ such that  $\gcd(b,r)=1$,
\[
1 \leq r \leq M^{\frac{d}{2}} N^{\frac{d-1}{2}} g^{- \frac{d-1}{2}}
\qquad
\textnormal{and}
\qquad
| r    g^d  h    \theta - b |\leq  M^{ - \frac{d}{2}} N^{- \frac{d-1}{2}} g^{\frac{d-1}{2}}.
\]
Given ($y_1, y_2) \in \mathcal{Y}$, let us denote
$$
g = \gcd(y_1, y_2),
\qquad
h = \frac{y_1 - y_2}{g}
$$
and
$$
\frac{d N^{d-1}}{g^{d-1}} \leq k = \frac{y_1^d - y_2^d}{ g^{d-1}( y_1 - y_2)} \leq  \frac{d 2^{d-1} N^{d-1}}{g^{d-1}}.
$$
Then we have
\begin{eqnarray*}
\left| \frac{a}{q} - \frac{ k b }{r}  \right|
&\leq& \frac{ | q (y_1^d - y_2^d) \theta - a |  }{q} + \frac{ | r g^d h k  \theta -  k  b |  }{r}
\\
&\leq& \frac{ \eta_1 Q }{ q M^d} +  \frac{ k  M^{ - \frac{d}{2}} N^{- \frac{d - 1}{2}} g^{ \frac{d-1}{2} } }{r}
\\
&\leq& \frac{\eta_1 Q }{ q M^d} +  \frac{ \eta_1  Q  d 2 ^{d-1}  M^{ - \frac{d}{2}} N^{ \frac{d-1}{2} } }{q r}
\\
&<& \frac{ 1 }{qr},
\end{eqnarray*}
where the final inequality follows since $d \geq 3$, $N \leq M$, $Q^4 \leq P \ll M N$ and
$$
\frac{ \eta_1 r  Q }{  M^d} +  \eta_1  Q  d 2 ^{d-1}  M^{ - \frac{d}{2}} N^{ \frac{d-1}{2} }  \leq  \eta_1 (1 + d 2^{d-1}) Q  M^{ - \frac{d}{2} } N^{ \frac{d-1}{2} } <  1.
$$
Therefore, it follows that
$$
\frac{a}{q}  =  \frac{ k b }{r},
$$
which implies
\begin{eqnarray*}
\lambda_{y_1, y_2} &=&   q + M^{d}|q (y_1^d - y_2^d) \theta - a|
\\
&=&  \frac{r}{ \gcd(k, r) }  +   \frac{M^{d} k | r  g^d h \theta - b |}{\gcd(k, r)   }
\\
&\gg&
 \frac{r}{ \gcd(k, r) }  +   \frac{M^{d} N^{d-1} g^{-(d-1)} | r  g^d h \theta - b |}{   \gcd(k, r)   }.
\end{eqnarray*}
Then
$$
E \ll  \sum_{ 1 \leq g < 2N  } E_g,
$$
where
$$
E_g 
=
\sum_{N/g \leq z < 2 N/g}
\sum_{ \substack{  1 \leq h < N/g  \\  \gcd(z + h, z) = 1   }  }
\left(  \frac{\gcd \bigl(  \frac{ (z+ h)^d - z^d }{h}   , r \bigr)   }{ r +   M^{d} N^{d-1} g^{-(d-1)} | r  g^d h \theta - b |} \right)^{ \frac{1}{d(d-1)} }.
$$
By following through the argument to bound~\cite[(3.9)]{KawaWoo}, we obtain
\begin{eqnarray*}
\notag
\sum_{ \substack{ N/g \leq z < 2 N/g  \\  \gcd(h, z) = 1   }  }\gcd \left(  \frac{ (z+ h)^d - z^d }{h}   , r \right)^{ \frac{1}{d(d-1)} }
&\ll& \sum_{  \substack{ s | r \\ 1 \leq s \leq  d 3^{d-1} N^{d-1}/ g^{d-1}   } } s^{ \frac{1}{d(d-1)} }  \sum_{  \substack{  N/ g \leq z < 2 N /g  \\ \gcd (h, z) = 1 \\ s | \frac{ (z+ h)^d - z^d }{h}  }  } 1
\\
\notag
&\ll& \sum_{  \substack{ s | r \\  1 \leq s \leq  d 3^{d-1} N^{d-1}/ g^{d-1}     } } s^{ \frac{1}{d(d-1)} + \varepsilon}  \left(  \frac{N}{ s g }  +  1 \right)
\\
\notag
&\ll&   P^{\varepsilon} N   g^{-1}  + P^{\varepsilon}  N^{\frac{1}{d}}  g^{ - \frac{1}{d}}
\\
\notag
&\ll&  P^{\varepsilon}  N  g^{-1}.
\end{eqnarray*}
Thus, we need to bound
$$
E \ll P^{\ve} \sum_{ 1 \leq g < 2N  } \sum_{  1 \leq h < N/g }
\frac{ N g^{-1}  }{  (r +   M^{d} N^{d-1} g^{-(d-1)} | r  g^d h \theta - b | )^{ \frac{1}{d(d-1)} }  }.
$$

Let $\eta_2 > 0$ be sufficiently small. We have
\begin{eqnarray*}
&& P^{\ve} \sum_{ 1 \leq g < 2N  } \sum_{ \substack{  1 \leq h < N/g \\  r > \eta_2 Q   } }  \frac{   N   g^{-1} }
{(  r +   M^{d} N^{d-1} g^{-(d-1)} | r  g^d h \theta - b | )^{ \frac{1}{d(d-1)}} }
\\
&\ll& P^{\ve} Q^{-  \frac{1}{d(d-1)} }  \sum_{ 1 \leq g < 2N  } \sum_{  1 \leq h < N/g }   N g^{-1}
\\
&\ll& P^{\ve} Q^{-  \frac{1}{d(d-1)} }  N^2
\\
&\ll& P^{2 + \ve} M^{- 2}  Q^{-  \frac{1}{d(d-1)} },
\end{eqnarray*}
which satisfies \eqref{Ebdd}. Similarly, we also have
\begin{eqnarray*}
&& P^{\ve} \sum_{ 1 \leq g < 2N  } \sum_{ \substack{  1 \leq h < N/g \\   M^{d} N^{d-1} g^{-(d-1)} | r  g^d h \theta - b | > \eta_2 Q   } }  \frac{   N g^{-1} }
{(  r +   M^{d} N^{d-1} g^{-(d-1)} | r  g^d h \theta - b | )^{ \frac{1}{d(d-1)}} }
\\
&\ll& P^{2 + \ve} M^{- 2}  Q^{-  \frac{1}{d(d-1)} }.
\end{eqnarray*}
Therefore, from this point on, we may further suppose without loss of generality that
$$
r \leq \eta_2 Q
$$
and
$$
|r g^d h \theta - b| \leq \frac{\eta_2 Q}{M^d N^{d-1} g^{-(d-1)}} \qquad
(1 \leq g < 2N, 1 \leq h < N/g).
$$

Let $1 \leq g < 2N$. By Dirichlet's theorem, there exist $c \in\ZZ$ and $s \in \NN$ such that  $\gcd(c, s)=1$,
\[
1 \leq s \leq M^{\frac{d}{2}} N^{\frac{d}{2}} g^{ - \frac{d}{2} }
\qquad
\textnormal{and}
\qquad
|s g^d \theta - c| \leq  M^{ - \frac{d}{2}} N^{- \frac{d}{2} } g^{\frac{d}{2}}.
\]
Then we have
\begin{eqnarray*}
\left| \frac{b}{r} - \frac{ h c }{s}  \right|
&\leq&  \frac{ | r   g^d h \theta -  b |  }{r} + \frac{ h | s  g^d  \theta -  c  |  }{s}
\\
&\leq& \frac{\eta_2 Q}{ M^d N^{d-1} g^{-(d-1)} r }  + \frac{ h  M^{ - \frac{d}{2}} N^{- \frac{d}{2}} g^{\frac{d}{2}}  }{s}
\\
&\leq& \frac{  \eta_2 s Q M^{-d}   N^{- (d-1) } g^{d-1} + r h  M^{ - \frac{d}{2}} N^{- \frac{d}{2}} g^{\frac{d}{2}}    }{rs}
\\
&<& \frac{ 1 }{r s},
\end{eqnarray*}
where the final inequality follows since $d \geq 3$, $N \leq M$, $Q^4 \leq P \ll M N$  and
\begin{eqnarray*}
\eta_2 s Q M^{-d}   N^{- (d-1) } g^{d-1} + r h  M^{ - \frac{d}{2}} N^{- \frac{d}{2}} g^{\frac{d}{2}}
&\leq& 2 \eta_2 Q M^{- \frac{d}{2}} N^{1 - \frac{d}{2}} g^{-1 + \frac{d}{2}}
\\
&\leq& 2^{d} \eta_2 Q M^{- \frac{d}{2}}
\\
&<& 1.
\end{eqnarray*}
Therefore, it follows that
$$
\frac{b}{r}  =  \frac{ h c }{s}, 
$$
which implies
\begin{eqnarray*}
r +   M^{d} N^{d-1} g^{-(d-1)} | r  g^d h \theta - b |
=   \frac{  s  +  M^{d}  N^{d-1} g^{-(d-1)} h  |s g^d \theta - c| }{\gcd(h, s)}.
\end{eqnarray*}
Thus, we need to bound
\begin{eqnarray}
\label{bddEEE}
E &\ll& P^{\ve} \sum_{ 1 \leq g < 2N  } \sum_{  1 \leq h < N/g }
\frac{ N g^{-1}  \gcd(h, s)^{\frac{1}{d(d-1)}}   }{  (s  +  M^{d}  N^{d-1} g^{-(d-1)} h  |s g^d \theta - c| )^{ \frac{1}{d(d-1)} }  }
\\
\notag
&\ll&
 P^{\ve} \sum_{ 1 \leq g < 2N  } \frac{ N g^{-1}    }{  (s  +  M^{d}  N^{d} g^{-d}  |s g^d \theta - c| )^{ \frac{1}{d(d-1)} }  }
  \sum_{  1 \leq h < N/g }
\frac{ \gcd(h, s)^{\frac{1}{d(d-1)}}   }{  ( N^{-1} g h)^{ \frac{1}{d(d-1)} }  }
\\
\notag
&\ll&
 P^{\ve} \sum_{ 1 \leq g < 2N  } \frac{ N^{1 + \frac{1}{d(d-1)}} g^{-1 - \frac{1}{d(d-1)}}    }{  (s  +  M^{d}  N^{d} g^{-d}  |s g^d \theta - c| )^{ \frac{1}{d(d-1)} }  }
  \sum_{  1 \leq h < N/g }
\frac{ \gcd(h, s)^{\frac{1}{d(d-1)}}   }{  h^{ \frac{1}{d(d-1)} }  }.
\end{eqnarray}

Since
\begin{eqnarray*}
\sum_{  1 \leq h < N/g }
\frac{ \gcd(h, s)^{\frac{1}{d(d-1)}}   }{  h^{ \frac{1}{d(d-1)} }  }
&\leq& \sum_{ \substack{  \ell | s \\ 1 \leq \ell <  N /  g  } }  \sum_{ \substack{ 1 \leq h < N/g \\ \ell | h }  }
\frac{ \ell^{\frac{1}{d(d-1)}}   }{  h^{ \frac{1}{d(d-1)} }  }
\\
&\ll&
\sum_{ \substack{  \ell | s \\ 1 \leq \ell < N /  g  } }    N^{ 1 - \frac{1}{d(d-1)} }  g^{ -1 +  \frac{1}{d(d-1)} }
\\
&\ll&  P^{\ve}  N^{ 1 - \frac{1}{d(d-1)} }  g^{ -1 +  \frac{1}{d(d-1)} },
\end{eqnarray*}
\eqref{bddEEE} becomes
$$
E \ll P^{\ve} \sum_{ 1 \leq g < 2N  } \frac{ N^{2} g^{-2}    }{  (s  +  M^{d}  N^{d} g^{-d}  |s g^d \theta - c| )^{ \frac{1}{d(d-1)} }  }.
$$
Let $v = \gcd(g^d, c)$. Then
$$
s  +  M^{d}  N^{d} g^{-d}  |s g^d \theta - c|
= \frac{v}{g^d} \left( s' + M^d N^d |s' \theta - c'| \right)
\gg g^{-d} \left( s' + P^d |s' \theta - c'| \right),
$$
where $s' = s g^d/v$ and $c' = c/v$.
Since $\theta\notin\mathfrak M(Q)$, we have
\begin{equation*}
Q < s' + P^d |s' \theta - c'|,
\end{equation*}
from which it follows that
\begin{equation*}
\frac{Q}{2} < s' \qquad \textnormal{or} \qquad \frac{Q}{2} <  P^d |s' \theta - c'|.
\end{equation*}
Therefore, it follows that
\begin{eqnarray*}
\notag
E &\ll& P^{\ve} \sum_{ 1 \leq g < 2N  } \left(  N^{2} g^{-2}   Q^{ - \frac{1}{d(d-1)}}
+  \frac{ N^{2 } g^{-2 }   }{  ( g^{-d}  Q  )^{ \frac{1}{d(d-1)} }  } \right)
\\
\notag
&\ll& P^{\ve} N^2  Q^{ - \frac{1}{d(d-1)}} + P^{\ve} N^{2 } Q^{ - \frac{1}{d(d-1)}}
\sum_{ 1 \leq g < 2N  }  g^{-2  + \frac{1}{d-1} }
\\
\notag
&\ll& P^{2 + \ve} M^{-2}   Q^{ - \frac{1}{d(d-1)}},
\end{eqnarray*}
which again satisfies \eqref{Ebdd}. This completes the proof of the lemma.

\end{document}